\documentclass{article}

\usepackage{lipsum}
\usepackage{amsfonts}
\usepackage{graphicx}
\usepackage{amsmath}
\usepackage{xcolor}
\usepackage{bm}
\usepackage{mathabx}
\usepackage{algorithm}
\usepackage{algpseudocode}
\usepackage{amsthm}
\usepackage{geometry}
\geometry{margin=2.5cm}

\newcommand{\bo}[1]{\mathbf{#1}}
\newcommand{\bb}[1]{\mathbb{#1}}
\newcommand{\kk}[1]{\mathfrak{#1}}
\newcommand{\cc}[1]{\mathcal{#1}}
\newcommand{\nc}{\newcommand}
\nc{\cblue}[1]{\color{blue}{#1}\color{black}{}}
\nc{\cred}[1]{\color{red}{#1}\color{black}{}}
\nc{\cgrey}[1]{\color{gray}{#1}\color{black}{}}
\nc{\cmagenta}[1]{\color{magenta}{#1}\color{black}{}}
\nc{\corange}[1]{\color{orange}{#1}\color{black}{}}
\nc{\cviolet}[1]{\color{violet}{#1}\color{black}{}}
\nc{\kp}[1]{K^{(#1)}}
\nc{\mat}[1]{\begin{bmatrix}#1\end{bmatrix}}
\nc{\diag}[2]{\displaystyle\mathop{diag}_{#1}\left(#2\right)}
\newtheorem{theorem}{Theorem}
\newtheorem{remark}{Remark}

\title{Kroneckerised Particle Mesh Ewald}

\author{Igor Chollet\footnote{LAGA, Université Sorbonne Paris Nord, UMR 7539, Villetaneuse, France, chollet@math.univ-paris13.fr}}

\begin{document}

\maketitle

\begin{abstract}    
    Particle Mesh Ewald (PME) methods accelerated through Fast Fourier Transforms (FFTs) for their reciprocal part are widely used to solve $N$-body problems over periodic structures with Laplace-like kernels. The FFT dependence of classical PME may mitigate its performance on parallel distributed-memory architectures. We here introduce a new variant of the reciprocal part based on Sum of Kronecker Products (SKP) instead of FFT. Moreover, our implementation of this new method is not linearithmic (as opposed to classical PME) but has an important parallel potential. We present the different approximation levels exploited in our new scheme and demonstrate to what extent it could be used on parallel distributed-memory architectures. Numerical examples supplement presented assertions. 
    
    \end{abstract}

\section{Introduction}
This paper focuses on the parallel solving of periodic N-body problems, which are particularly interesting for molecular dynamics models. More generally, the class of N-body problems considered in this paper corresponds to those studied in Ewald summations. 

\subsection{Periodic $N$-body problems and notations}
For any finite sets $\bb{X}$, $\bb{Y}$ (whose elements are named \textit{particles}) and for $\bb{K} = \bb{R}$ or $\bb{C}$, we denote by $\bb{K}[\bb{X}]$ the set of applications from $\bb{X}$ to $\bb{K}$ and by $\bb{K}[\bb{X},\bb{Y}]$ the set of linear operators from $\bb{K}[\bb{Y}]$ to $\bb{K}[\bb{X}]$. In our application case, let $\bb{X}$ and $\bb{Y}$ be two point clouds lying in a box $B$ of radius $r_B$ centered in $\bo{c}_B$ and let $\bo{q}\in \bb{C}[\bb{Y}]$. We use $\bo{q}=[q(\bo{y})]_{\bo{y}\in \bb{Y}}$ to denote a column vector with entries associated to elements of $\bb{Y}$. We consider the task of solving the following $N$-body problem
\begin{equation}
\label{eq::qmutheta_coulomb}
    \forall\hspace{0.05cm}\bo{x}\in \bb{X},\hspace{0.8cm}p(\bo{x}) = \sum_{\bo{t}\in 2r_B\bb{Z}^3}\sum_{\bo{y}\in \bb{Y}}G(|\bo{x}-\bo{y}+\bo{t}|)q(\bo{y}),
\end{equation}
i.e. the problem of computing $\bo{p}\in \bb{C}[\bb{X}]$ from $\bo{q}$ applying $\big[\sum_{\bo{t}\in 2r_B\bb{Z}^3}G(|\bo{x}-\bo{y}+\bo{t}|)\big]_{\bo{x}\in\bb{X},\bo{y}\in\bb{Y}}\in \bb{C}[\bb{X},\bb{Y}]$ (notation for matrix with rows indexed by elements of $\bb{X}$ and columns indexed by elements of $\bb{Y}$) represented by a matrix of nodal evaluation of a Green-like kernel $G$. In other words, it consists in computing every possible pairwise interactions between particles of target point cloud $\bb{X}$ and particles of source point cloud $\bb{Y}$ with respect to a (here periodic) interaction law. For the sake of clarity, we will consider that $G(p) = R^{-1}$ denotes the Coulombic kernel. The main issue in directly considering Eq. \eqref{eq::qmutheta_coulomb} lies in the \textit{conditionally} convergent behavior of the series over $\bb{Z}^3$, possibly prohibiting accurate naive computation on large point clouds. Moreover, even when truncating this series, the problem still has a quadratic complexity bound due to a number of evaluation of $G$ proportional to the product sizes of point clouds $\bb{X}$ and $\bb{Y}$, supposed to be $N$ in this article.

Such problem arises in many scientific areas, especially in molecular dynamics simulations in which particles ($\bo{x}$'s and $\bo{y}$'s) are atoms and $\bo{q}$ corresponds to a vector of atom charges. Moreover, the quadratic cost is too prohibitive for modern applications. Fortunately, there exist several fast approaches to numerically deal with Eq. \eqref{eq::qmutheta_coulomb}. Among them, Particle-Mesh Ewald, denoted PME \cite{pmeref,spmeref,TOUKMAJI199673}; Fast Multipole Methods \cite{ankh,challacombe,407723,YAN2018214,FONG20098712,GREENGARD1987325,edarig}; or recently Dual-space Multilevel Kernel splitting framework \cite{https://doi.org/10.1002/cpa.22240}.

In the following, vectors will be written using \textbf{bold letters} and their elements are referred to through the same letter without bold and indexed by elements of the set they are defined on (e.g. $\bo{q}=(q(\bo{y}_0),...,q(\bo{y}_{N-1}))^T$ for $\bo{q}\in\bb{C}[\bb{Y}]$ with $\bb{Y}=\{\bo{y}_0,...,\bo{y}_{N-1}\}$ for a point cloud with $N$ points). Notice that the use of freely generated spaces allows us to avoid the cumbersome writing style required by the ordering of point cloud elements. Linear operators (i.e. matrices most of the time in this article) on freely generated vector spaces will be denoted by calligraphic letters (e.g. $\cc{A}$). Finite sets, point clouds or grids are denoted using blackboard bold letters (e.g. $\bb{Y}$). The euclidean norm shall be denoted by $|\cdot |$ and $|\cdot |_\infty$ will refer to the maximum norm.

\subsection{Particle Mesh Ewald}
\label{s:pme}
Particle Mesh Ewald (PME) \cite{pmeref,spmeref} is a linearithmic (i.e. $\cc{O}(NlogN)$ cost) method allowing to deal with Eq. \eqref{eq::qmutheta_coulomb} in a mathematically convergent and numerically stable way, built upon Ewald Summation techniques \cite{TOUKMAJI199673}, following Ewald's method \cite{ewald_german}. This approach relies on a splitting between long and short ranges, the first treated in the \textit{real space} and the second in the Fourier (named \textit{reciprocal}) space, classically allowing to exploit FFTs. The method can be summarised as follows: let $\tau\hspace{0.03cm}:\hspace{0.03cm}\bb{R}^{+*}\rightarrow \bb{R}$ be a truncation function such that $1- \tau(p)$ quickly tends to $0$ and $ \tau$ is \textit{sufficiently} smooth. We have
\begin{equation}
\label{eq::tau}
    G(p) = \underbrace{(1- \tau(p))G(p)}_{\text{short range}} + \underbrace{ \tau(p)G(p)}_{\text{long range}}.
\end{equation}
When carefully choosing $ \tau$, one can express the series over the short range part as an absolutely convergent series and the one over the long range part as follows
\begin{equation}
    \sum_{\bo{t}\in 2r_B\bb{Z}^3}\sum_{\bo{y}\in \bb{Y}} \tau(|\bo{x}-\bo{y}+\bo{t}|)G(|\bo{x}-\bo{y}+\bo{t}|)q(\bo{y}) =: \sum_{\bo{y}\in \bb{Y}}\cc{H}(\bo{x},\bo{y})q(\bo{y}),
\end{equation}
$\cc{H}$ being computed with the Fourier transform of the long range part. For the Coulombic kernel and the error function $ \tau(p) = erf(\xi R)$, $\cc{H}$ has the following expression
\begin{equation}
\label{eq::hexpr}
    \cc{H}(\bo{x},\bo{y})=\sum_{0\neq \bo{m}\in \bb{Z}^3}\underbrace{\frac{e^{-\frac{\pi ^2}{\xi^2}|\bo{m}|^2}}{|\bo{m}|^2}}_{=:\alpha(|\bo{m}|)}e^{2i\pi \langle \tilde{\bo{x}}-\tilde{\bo{y}},\bo{m}\rangle}
\end{equation}
where $\xi$ denotes the Ewald parameter and $\tilde{\bo{x}} := \frac{\bo{x}-\bo{c}_B}{r_B}$ refers to the normalised relative position of $\bo{x}$ with respect to the computational box (without loss of generality, we assume here that the box is cubic). The series in Eq. \eqref{eq::hexpr} can be truncated (as well as the series in the real space for the short range part). The larger $\xi$, the larger the number of Fourier modes must be considered in Eq. \eqref{eq::hexpr} for a target output accuracy. By optimizing $\xi$, one may obtain the Ewald Summation resulting in $\cc{O}(N^{3/2})$ complexity \cite{simonett}. The PME alternative consists in choosing $\xi$ such that most of the computation takes place in the Fourier space (that is in the long range part). Then, Eq. \eqref{eq::hexpr} is computed through the factorisation
\begin{equation*}
    \sum_{\bo{y}\in \bb{Y}}\cc{H}(\bo{x},\bo{y})q(\bo{y}) \approx \sum_{0\neq \bo{m}\in \bb{Z}^3}\alpha(|\bo{m}|)e^{2i\pi \langle \tilde{\bo{x}},\bo{m}\rangle}\underbrace{\sum_{\bo{y}\in Y}e^{-2i\pi \langle\tilde{\bo{y}},\bo{m}\rangle}q(\bo{y})}_{\textit{structure factor}}
\end{equation*}
where the structure factor can be practically assembled through a FFT over vector $\bo{q}\in\bb{R}[\bb{Y}]$ (after an interpolation of particle positions over a cartesian grid and a truncation over Fourier modes). 
The FFT linearithmic complexity thus dominates the computation. To be more precise, using our notations, PME aims at computing
\begin{equation}
\label{eq::sumhexpr}
    \cc{H}_M\cdot\bo{q}, \hspace{0.5cm}\cc{H}_M(\bo{x},\bo{y}) = \sum_{0\neq \bo{m}\in \bb{M}^3}\alpha(|\bo{m}|)e^{2i\pi \langle \tilde{\bo{x}}-\tilde{\bo{y}},\bo{m}\rangle},
\end{equation}
with $\bb{M} = [\![-M,M]\!]$, $\cc{H}_M\in \bb{C}[\bb{X},\bb{Y}]$ and $\bo{q}\in \bb{C}[\bb{Y}]$, using FFTs. 

\subsection{Problem, contributions and article structure}
Despite the massive use of PME in molecular dynamics simulation codes thanks to high performance of modern FFT implementations (such as \cite{FFTW05}), its performance directly depends on this last, that may mitigate on distributed memory architectures \cite{10.1007/978-3-030-86359-3_21}. However, efficient handling of such architectures is a crucial need in order to treat large scale simulations. In addition, PME algorithm does not particularly benefit from the multivariate nature of Fourier matrices arising in its fast scheme. In the present article, we wonder to what extent this could be exploited in order to find efficient evaluation schemes, especially on distributed memory architectures. We answer this question through
\begin{itemize}
    \item providing a new formulation of the PME reciprocal part as a Sum of Kronecker Products, presented in Sect. \ref{s::sumofkronprod}, denoted as KPME;
    \item exploiting parallel Kronecker Products for KPME, presented in Sect. \ref{s::fkpeog};
    \item proposing a novel scalable scheme to evaluate KPME on distributed memory architectures, given in Sect. \ref{s::kpmepar}, based on a \textit{divide-and-conquer} strategy.
\end{itemize}
Theoretical error bounds and complexities are proposed. Numerical experiments complement our theoretical claims and are provided in Sect. \ref{s::numericals}. For the sake of reproducibility, and to cover implementation details that cannot be mentioned in this article, the code used for testing and developed for this paper is also provided\footnote{https://github.com/IChollet/kpme}.

\section{Sum of Kronecker product}
\label{s::sumofkronprod}
In this section, we demonstrate that the matrix form of the long range part of the Ewald Summation can be approximated as a sum of weighted Kronecker products. The global result is summarised in Thm. \ref{thm::kronpme}. This theorem claims that, for any targeted precision $\epsilon$, one may find an expression of the reciprocal part of Ewald summation in the form of a Sum of Kronecker Product (SKP), at the cost of passing through a cartesian grid, up to a controllable error $\epsilon$. Thus, due to this particle mapping over a grid, this procedure enters the category of \textit{particle-mesh}. We recall that the Kronecker product $\otimes$ between two matrices denotes the matrix tensor product using the standard basis, that is
\begin{equation}
    \begin{bmatrix}a_{00}\hdots a_{0N}\\\vdots\ddots\vdots\\a_{M0}\hdots a_{MN}\end{bmatrix}\otimes\cc{B} := \begin{bmatrix}a_{00}\cc{B}\hdots a_{0N}\cc{B}\\\vdots\ddots\vdots\\a_{M0}\cc{B}\hdots a_{MN}\cc{B}\end{bmatrix}.
\end{equation}

\begin{theorem}[Existence of KPME]
\label{thm::kronpme}
Let $M\in \bb{N}^*$, let $\cc{H}_M$ be the reciprocal kernel defined in Eq. \eqref{eq::sumhexpr} and let $\epsilon > 0$. Then there exist $ \Xi_\bb{Y}\subset \bb{R}^{3}$ (resp. $ \Xi_\bb{X}$) a product of finite grids, $ \Xi_\bb{Y} =  \Xi_{\bb{Y},0}\otimes  \Xi_{\bb{Y},1} \otimes  \Xi_{\bb{Y},2}$ (resp. $ \Xi_\bb{X} =  \Xi_{\bb{X},0}\otimes  \Xi_{\bb{X},1} \otimes  \Xi_{\bb{X},2}$), there exist a finite set $\big\{ \cc{A}^{(p)}(\lambda)\in \bb{R}[ \Xi_\bb{X}, \Xi_\bb{Y}]|k\in \{0,1,2\}\big\}_{\lambda\in \Lambda}$ of linear operators parameterised by elements of a finite set $\Lambda$, there exist $\cc{S}_\bb{Y}\in \bb{R}[ \Xi_\bb{Y},\bb{Y}]$ and $\cc{S}_\bb{X}^*\in \bb{R}[\bb{X}, \Xi_\bb{X}]$ such that if the computational box $B$ has a smaller radius than $M^{-1}$, then
\begin{equation}
    \bigg|\bigg|\cc{H}_M-\cc{S}_\bb{X}^*\left(\sum_{\lambda\in \Lambda}\left(\cc{A}^{(0)}(\lambda)\otimes \cc{A}^{(1)}(\lambda)\otimes \cc{A}^{(2)}(\lambda)\right)\right)\cc{S}_\bb{Y}-c_\Lambda\bo{1}_N\bo{1}_N^T\bigg|\bigg| < \epsilon,
\end{equation}
where the cardinal of $\Lambda$ denoted by $|\Lambda|$ verifies $|\Lambda| \leq (2M+1)^2$, $\bo{1}_{N}$ denoting a vector of length $N$ full of ones and $c_\Lambda$ is a constant.
\end{theorem}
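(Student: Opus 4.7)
The plan is to combine a tensor decomposition of the exponential kernel with a grid-interpolation step, turning the three-dimensional discrete Fourier sum into a sum of three-way Kronecker products plus a rank-one correction. The main difficulty is that, while the phase $e^{2i\pi \langle \tilde{\bo{x}} - \tilde{\bo{y}}, \bo{m}\rangle}$ splits cleanly across the three coordinate axes, the weight $\alpha(|\bo{m}|)$ couples all three components through $|\bo{m}|^2 = m_0^2+m_1^2+m_2^2$, so a fully separable decomposition $\alpha(|\bo{m}|) \approx \sum_\mu \beta_0^{(\mu)}(m_0) \beta_1^{(\mu)}(m_1) \beta_2^{(\mu)}(m_2)$ with few terms is not available; I pay a cost of at most $(2M+1)^2$ tensor rank by keeping two of the three summation indices, say $\lambda = (m_1, m_2)$, explicit.

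The first step is to write the kernel exactly on a product grid. Group $\bo{m} = (m_0, \lambda)$ and set
\begin{equation*}
    \varphi_\lambda(t) := \sum_{m_0 \in \bb{M}} \alpha\bigl(\sqrt{m_0^2 + |\lambda|^2}\bigr)\, e^{2i\pi t m_0},
\end{equation*}
with the convention $\alpha(0) := c$ for some arbitrary $c$ at $(m_0, \lambda) = 0$. Then
\begin{equation*}
    \sum_{\bo{m} \in \bb{M}^3} \alpha(|\bo{m}|)\, e^{2i\pi \langle \tilde{\bo{x}} - \tilde{\bo{y}}, \bo{m}\rangle}
    = \sum_{\lambda \in \bb{M}^2} \varphi_\lambda(\tilde{x}_0 - \tilde{y}_0)\cdot e^{2i\pi (\tilde{x}_1 - \tilde{y}_1) \lambda_0}\cdot e^{2i\pi (\tilde{x}_2 - \tilde{y}_2) \lambda_1},
\end{equation*}
and the inserted $\bo{m} = 0$ term, being $(\tilde{\bo{x}}, \tilde{\bo{y}})$-independent, produces the rank-one correction $c_\Lambda \bo{1}_N \bo{1}_N^T$ with $c_\Lambda = c$. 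Restricting this identity to $\tilde{\bo{x}}, \tilde{\bo{y}}$ ranging over product grids $\Xi_\bb{X} = \prod_k \Xi_{\bb{X},k}$, $\Xi_\bb{Y} = \prod_k \Xi_{\bb{Y},k}$ defines three families of one-dimensional operators $\cc{A}^{(k)}(\lambda) \in \bb{R}[\Xi_{\bb{X},k}, \Xi_{\bb{Y},k}]$ (one built from $\varphi_\lambda$, the other two from the pure exponentials) whose Kronecker product assembles $\cc{H}_M$ exactly on the grid. Real-valuedness is recovered by pairing $\bo{m}$ with $-\bo{m}$ and rewriting the complex exponentials in sine/cosine form. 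The bound $|\Lambda| \le (2M+1)^2$ is immediate from $\lambda \in \bb{M}^2$.

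The second step transfers this grid identity to arbitrary point clouds $\bb{X}, \bb{Y} \subset B$ by interpolation maps $\cc{S}_\bb{Y}: \bb{R}[\bb{Y}] \to \bb{R}[\Xi_\bb{Y}]$ and $\cc{S}_\bb{X}^*: \bb{R}[\Xi_\bb{X}] \to \bb{R}[\bb{X}]$ that factor along the axes and reproduce trigonometric polynomials of frequency at most $M$ in each coordinate up to an error $\eta$ depending on the grid resolution. Since every summand of $\cc{H}_M$ is such a trigonometric polynomial in both $\tilde{\bo{x}}$ and $\tilde{\bo{y}}$ with effective one-axis bandwidth $Mr_B$, the hypothesis $r_B < M^{-1}$ keeps $Mr_B < 1$ uniformly, ensuring that one-dimensional interpolation of $z \mapsto e^{2i\pi z m}$ on $[-r_B, r_B]$ converges geometrically with a constant independent of $M$. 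A tensor-product triangle inequality across the three axes, followed by summation over $\lambda$, then yields an operator-norm error of the order of $|\Lambda|\,\eta$, which is driven below $\epsilon$ by refining the grids.

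The main obstacle I anticipate is exactly this tensorised interpolation estimate: one must ensure that the error accumulated when going from one-axis interpolation to the three-axis tensor product, and then summed over $\Lambda$, does not pick up a prohibitive dependence on $M$. The rest of the argument is bookkeeping — verifying that $\cc{S}_\bb{X}^*$ and $\cc{S}_\bb{Y}$ factor through the three axes so that the Kronecker structure survives the change of basis between point clouds and grids, and confirming that the $\bo{m} = 0$ subtraction contributes a literal rank-one matrix with constant entry $c_\Lambda$.
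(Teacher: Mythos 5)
Your argument is correct as a proof of the existence statement, but it reaches the conclusion by a genuinely different route from the paper. The paper's proof hinges on making the weight $\alpha(|\bo{m}|)$ itself separable across all three mode indices: either algebraically, by a nested nearest-Kronecker-product approximation of the vector $\bm{\alpha}_M$ via successive SVDs (giving $|\Lambda|=(r_0+1)(r_1+1)\le(2M+1)^2$ with $r_0,r_1$ the truncation ranks), or analytically, by a sinc-type quadrature of $|\bo{m}|^{-2}=\int_{\bb{R}^+}e^{-t|\bo{m}|^2}dt$, which turns $\alpha$ into a short sum of products of univariate Gaussians with $|\Lambda|=\cc{O}(\log^2(1/\epsilon))$. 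Once $\alpha$ is separable, the triple sum over $\bo{m}$ factors through the Kronecker product and each factor $\cc{A}^{(j)}(\lambda)$ is a small fixed-rank real matrix built from the cosine/sine blocks $\cc{W}_{m,\cdot}$. You instead sidestep the separation of $\alpha$ entirely by promoting $\lambda=(m_1,m_2)$ to the outer index and absorbing the $m_0$-sum into the first factor $\varphi_\lambda$. This is exact, meets the bound $|\Lambda|\le(2M+1)^2$ (with equality), correctly isolates the $\bo{m}=\bo{0}$ term as the rank-one correction, and, combined with the same tensorised Lagrange interpolation step, does prove the theorem as stated. What it gives up is precisely the compression that motivates the theorem and the ensuing fast algorithm: the paper's $\Lambda$ is meant to be small and its factors low-rank so that the SKP can be applied and parallelised cheaply, whereas your $\Lambda$ is the full $(2M+1)^2$ and your axis-$0$ factor has rank growing with $M$.

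Two details deserve more care than your sketch gives them. First, realification: pairing $\lambda$ with $-\lambda$ yields $2\mathrm{Re}\{e^{2i\pi(\tilde{x}_1-\tilde{y}_1)\lambda_0}e^{2i\pi(\tilde{x}_2-\tilde{y}_2)\lambda_1}\}$, and the real part of a product does not factor across the two axes; one needs $\cos(a+b)=\cos a\cos b-\sin a\sin b$ together with the splitting $\cos(2\pi m(x-y))=c_m(x)c_m(y)+s_m(x)s_m(y)$ (the paper's $2\times|\bb{Y}_j|$ matrices $\cc{W}_{m,\bb{Y}_j}$), which replaces each complex rank-one factor by a real rank-two one. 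Second, the quantitative interpolation claim: the paper's explicit geometric rate (Thm.~\ref{thm::convratemultcomp}) requires $r_B\le(\pi M)^{-1}$, not merely $r_B<M^{-1}$; under the weaker hypothesis one only gets convergence via the Trefethen--Weideman criterion, without the uniform rate $\nu^L$ with $\nu<1$ that you invoke. Neither point is fatal for an existence statement, but both need to be spelled out.
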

The proof of Thm. \ref{thm::kronpme} is based on three points: a decomposition (or approximation) of the vector of nodal evaluations of $\alpha$ as a sum of Kronecker products (presented in Sect. \ref{ss:decompalpha}), an interpolation of particle positions over product grids (presented in Sect. \ref{ss::multpolintprod}), a use of actions of the underlying invariant group (presented in Sect. \ref{ss::symfourier}). Finally, in Sect. \ref{ss::overallcomp}, we describe how these elements combine to obtain Thm. \ref{thm::kronpme}.

\subsection{Decomposition of $\alpha_M$}
\label{ss:decompalpha} In this section, we aim at expressing $\bm{\alpha}_M$, that is the vector
\begin{equation}
    \bm{\alpha}_M := \Bigg[\begin{cases}\alpha(|\bo{m}|)\text{ if }\bo{m}\neq \bo{0}\\0\text{ if }\bo{m}=\bo{0}\end{cases}\Bigg]_{\bo{m}\in  \bb{M}^3}
\end{equation}
as a SKP, where $\bb{M} := [\![-M,M]\!]$. The problem we are trying to solve is thus the following: given $\epsilon$, find $\Lambda$ a finite set, $\omega_\lambda$, $c_\Lambda\in \bb{R}$, $\bm{\alpha}_{0,\lambda}$, $\bm{\alpha}_{1,\lambda}$, $\bm{\alpha}_{2,\lambda}\in \bb{R}\big[\bb{M}\big]$ such that
\begin{equation}
\label{eq::pb_skp}
    \bigg|\bigg|\bm{\alpha}_M - \sum_{\lambda\in \Lambda}\omega_\lambda \left(\bm{\alpha}_{0,\lambda}\otimes \bm{\alpha}_{1,\lambda}\otimes \bm{\alpha}_{2,\lambda}\right)+c_\Lambda\bm{\delta}\bigg|\bigg| < \epsilon,
\end{equation}
$\bm{\delta}$ being a given vector used to correct the result. This can be achieved in multiple ways. We here present two of them: one suited for general choice (i.e. \textit{kernel-independent}) of function $\tau$ (see Sect. \ref{s:pme}) is described in Sect. \ref{sss::nkpa}, a second being adapted to the usual choice $\tau(p)= erf(p)$ is given in Sect. \ref{sss::quadrature_based}.

\subsubsection{Nearest Kronecker Product Approximation}
\label{sss::nkpa}
When $c_\Lambda=0$, the problem \eqref{eq::pb_skp} becomes a particular case of nearest Kronecker product approximation \cite{LOAN200085} consisting for a matrix $\cc{A}$ of finding $\cc{B}$ and $\cc{C}$ such that $\cc{A}\approx \cc{B}\otimes \cc{C}$. It can be solved in a simple way by exploiting a sequence of Singular Value Decomposition (SVD). Indeed, if $\cc{A}= \cc{U}\Sigma \cc{V}^T = \sum_{k}\sigma_k\bo{u}_k\bo{v}_k^T= \sum_{k}\sigma_k\bo{u}_k\otimes \bo{v}_k^T$, where $\bo{u}_k$ (resp. $\bo{v}_k$) denotes the $k^{th}$ column of the orthonormal matrix $\cc{U}$ (resp. $\cc{V}$), we have a SKP formulation of matrix $\cc{A}$.

Let $\cc{I}_2^{\circ}\hspace{0.1cm}:\hspace{0.1cm}\bb{K}[\Xi_0\times \Xi_1\times \Xi_2]\rightarrow \bb{K}[\Xi_2, \Xi_0\times \Xi_1]$ that \textit{matricises} a vector whose entries are defined on a product grid in the following way: the output matrix has rows corresponding to elements of $\Xi_2$ and columns corresponding to elements of $\Xi_0\times \Xi_1$. We denote by $\cc{I}_2^{\bullet}$ the reverse operation that transforms such a matrix into a vector whose entries correspond to elements of $\Xi_0\times \Xi_1\times \Xi_2$. In a more general way, we define by $\cc{I}_{k}^{\circ}\hspace{0.1cm}:\hspace{0.1cm}\bb{K}[\Xi_0\times \hdots\times \Xi_{d-1}]\rightarrow \bb{K}[\Xi_k, \Xi_0\times\hdots\times\Xi_{k-1}\times \Xi_{k+1} \times \hdots\Xi_{d-1}]$ the reshape of a vector into a matrix with rows corresponding to elements of $\Xi_k$.

By means of singular value decomposition, one obtains:
\begin{equation}
    \cc{I}_2^{\circ}(\bm{\alpha}_M) = \cc{U}\Sigma \cc{V}^T = \sum_{i}\sigma_i \bo{u}_i\otimes \bo{v}_i^T
\end{equation}
where $\bo{u}_i\in \bb{C}\big[\bb{M}\big]$, $\bo{v}_i\in \bb{C}\big[\bb{M}^2\big]$. Then, we simply recover a vector using:
\begin{equation}
    \bm{\alpha}_M = \cc{I}_2^{\bullet}\left(\cc{I}_2^{\circ}(\bm{\alpha}_M)\right) = \sum_{i}\sigma_i \bo{u}_i\otimes \bo{v}_i.
\end{equation}

This process can be repeated recursively on each $\bo{v}_i$ in order to directly build a full SKP approximation. This gives
\begin{equation}
\label{eq::last_decomp_kron}
    \cc{I}_1^{\circ}(\bo{v}_i) = \underbrace{\cc{U}_i\Sigma_i\cc{V}_i^T}_{\text{SVD decomposition}} = \sum_{j}\sigma_{i,j} \underbrace{\bo{u}_{i,j}}_{\substack{\text{column }j\\\text{ of }\cc{U}_i}}\otimes \bo{v}_{i,j}^T\Rightarrow \bm{\alpha}_M = \sum_{i,j}\sigma_i\sigma_{i,j}\bo{u}_i\otimes \bo{u}_{i,j}\otimes \bo{v}_{i,j}.
\end{equation}
It is well known \cite{doi:10.1137/1.9781421407944} that the SVD can be used in order to obtain low-rank approximations. Since we have that
$||\cc{I}_k^{\circ}(\bm{\alpha})||_F = ||\bm{\alpha}||_F$, where $||\cdot ||_F$ denotes the Frobenius norm, the classical estimate for SVD (Eckart-Young-Mirsky theorem) gives us
\begin{equation}
\label{eq::eym}
\begin{aligned}
        \bigg|\bigg|\bm{\alpha}_M-\underbrace{\sum_{j=0}^{r}\sigma_{i}\bo{u}_{i}\otimes \bo{v}_{i}}_{\mu_i}\bigg|\bigg|_F &= \bigg|\bigg|\cc{I}_2^{\circ}(\bm{\alpha}_M)-\sum_{i=0}^{r}\sigma_{i}\bo{u}_{i}\otimes \bo{v}_{i}^T\bigg|\bigg|_F = \sqrt{\sum_{i>r}\sigma_i^2}.
\end{aligned}
\end{equation}
We thus obtain the following estimate:
\begin{equation}
\begin{aligned}
        \bigg|\bigg|\bm{\alpha}_M-\sum_{i,j}\sigma_i\sigma_{i,j}\bo{u}_i\otimes \bo{u}_{i,j}\otimes \bo{v}_{i,j}\bigg|\bigg|_F &= \bigg|\bigg|\bm{\alpha}_M-\mu_i+\mu_i-\sum_{i,j}\sigma_i\sigma_{i,j}\bo{u}_i\otimes \bo{u}_{i,j}\otimes \bo{v}_{i,j}\bigg|\bigg|_F\\
        &\leq \underbrace{\bigg|\bigg|\bm{\alpha}_M-\mu_i\bigg|\bigg|}_{\text{Eq.\eqref{eq::eym}}}+\underbrace{\bigg|\bigg|\mu_i-\sum_{i,j}\sigma_i\sigma_{i,j}\bo{u}_i\otimes \bo{u}_{i,j}\otimes \bo{v}_{i,j}\bigg|\bigg|}_{=:\mu_{i,j}}
\end{aligned}
\end{equation}
in which
\begin{equation}
    \begin{aligned}
        \mu_{i,j} &= \bigg|\bigg|\sum_{i=0}^{r_0}\sigma_i\bo{u}_i\otimes\left(\bo{v}_i-\sum_{j=0}^{r_1}\sigma_{i,j}\bo{u}_{i,j}\bo{v}_{i,j}\right)\bigg|\bigg|_F\\
        &\leq \sum_{i=0}^{r_0}\sigma_i\underbrace{||\bo{u}_i||_F}_{=1}\underbrace{\bigg|\bigg|\cc{I}_1^{\circ}(\bo{v}_i)-\sum_{j=0}^{r_1}\sigma_{i,j}\bo{u}_{i,j}\bo{v}_{i,j}^T\bigg|\bigg|_F}_{=\sqrt{\sum_{j>r_1}\sigma_{i,j}^2}\text{ by Eckart-Young-Mirsky Thm.}}.
    \end{aligned}
\end{equation}
The final estimate thus writes
\begin{equation}
\label{eq::estimate_svd}
        \bigg|\bigg|\bm{\alpha}_M-\sum_{i,j}\sigma_i\sigma_{i,j}\bo{u}_i\otimes \bo{u}_{i,j}\otimes \bo{v}_{i,j}\bigg|\bigg|_F \leq \left(\sqrt{\sum_{i>r_0}\sigma_i^2}\right)+\left(\sum_{i=0}^{r_0}\sigma_i\sqrt{\sum_{j>r_1}\sigma_{i,j}^2}\right)
\end{equation}
where $r_0$ and $r_1$ are the truncation ranks chosen in the different SVDs. Moreover, provided that the singular values of $\cc{I}_2^{\circ}(\bm{\alpha}_M)$ quickly decay, it is possible to efficiently reduce the number of SKP terms. Notice that Eq. \eqref{eq::estimate_svd} leads us to choose a smaller tolerance in the truncated SVD over $\cc{I}_1^{\circ}(\bo{v}_i)$'s than over $\cc{I}_2^{\circ}(\bm{\alpha}_M)$.

According to the notation in Eq. \eqref{eq::pb_skp}, in the case of SVD-based SKP approximation of $\bm{\alpha}$, we thus came up with $\Lambda = [\![0,r_0]\!]\times [\![0,r_1]\!]$, $\omega_{(i,j)} = \sigma_i\sigma_{i,j}$, $c_\Lambda = 0$, $\bm{\alpha}_{0,(i,j)}=\bo{u}_i$, $\bm{\alpha}_{1,(i,j)}=\bo{u}_{i,j}$ and $\bm{\alpha}_{2,(i,j)}=\bo{v}_{i,j}$. In Fig. \ref{fig::ranks_kron}, we depicted how this process effectively compress the SKP approximation of $\bm{\alpha}$ depending on $\xi$ in the particular setting $\tau(p) = erf(p)$.
\begin{figure}
    \centering
    \includegraphics[width=0.8\linewidth]{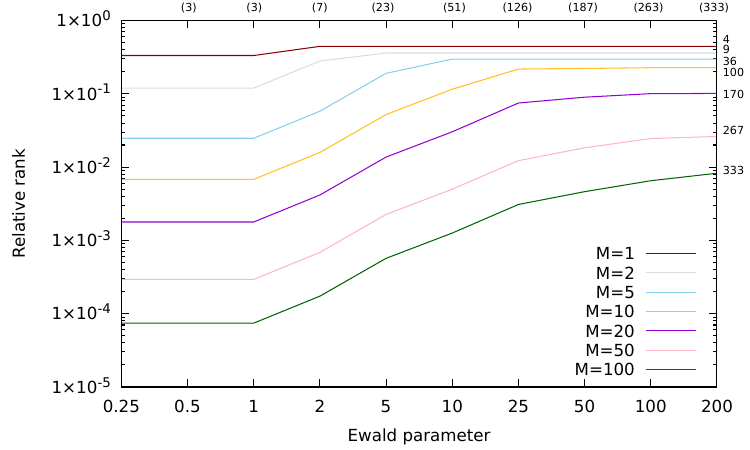}
    \caption{\label{fig::ranks_kron}Relative ranks (i.e. $\frac{r}{(2M+1)^2}$ where $r$ is the number of terms in the SKP approximation) of SKP for $\bm{\alpha}_M$ w.r.t. various values of \textit{Ewald parameter} $\xi$ for $\tau(p) = erf(\xi R)$ and $M=\xi$ with SVD tolerance $10^{-8}$. In top and parentheses are indicated the maximal rank over all tested $M$ for each corresponding $\xi$ and on the right are given the final ranks for $\xi=100$ for each $M$.}
\end{figure}

\begin{remark}
   The choice of putting $\bo{0}$ as value in $\bm{\alpha}_M$ for unused Fourier mode $\bo{m}=\bo{0}$ actually may not be optimal: one may also consider a better alternative in order to minimise the numerical ranks. However, we found in our tests that this choice of zero only had a small impact on these ranks.
\end{remark}
This algebraic way of obtaining the SKP has the advantage of not depending on the truncation function $\tau$. In particular cases, one may provide more efficient compression, as we are about to present it.

\subsubsection{Quadrature-based expansion}
\label{sss::quadrature_based}
In the particular case $\tau(p) = erf(\xi R)$, the explicit expression of $\bm{\alpha}_M$ is known through Eq. \eqref{eq::hexpr}:
\begin{equation}
    \bm{\alpha}_M := \Bigg[\frac{e^{-\frac{\pi ^2}{\xi^2}|\bo{m}|^2}}{|\bo{m}|^2}\Bigg]_{\bo{m}\in  \bb{M}^3} = \underbrace{\bigg[|\bo{m}|^{-2}\bigg]_{\bo{m}\in  \bb{M}^3}}_{=:\bm{\beta}_M}\odot\bigg[e^{-\frac{\pi ^2}{\xi^2}|\bo{m}|^2}\bigg]_{\bo{m}\in  \bb{M}^3}
\end{equation}
where $\odot$ denotes the Hadamard (i.e. element-wise) vector product. Since $e^{-\frac{\pi ^2}{\xi^2}|\bo{m}|^2} = e^{-\frac{\pi ^2}{\xi^2}m_0^2}e^{-\frac{\pi ^2}{\xi^2}m_1^2}e^{-\frac{\pi ^2}{\xi^2}m_2^2}$, it is clear that only $\bm{\beta}_M$ prohibits the Kronecker expression. We thus want to express $\bm{\beta}_M$ as a SKP itself. To do so, we exploit the quadratures of the integral formula for inverse function over $\bb{R}^*$ \cite{inversedistancequad}:
\begin{equation}
\label{eq::one_over_r_int}
    \frac{1}{R} = \int_{\bb{R}^+}e^{-tR}dt \approx \sum_{\lambda\in \Lambda}\omega_\lambda e^{-\lambda R}.
\end{equation}
Efficient quadrature (with nodes $\lambda$'s and associated weights $\omega_\lambda$) of this integral can be obtained in the case $R \geq 1$ \cite{inversedistancequad,yarvinrokhlin}. This is convenient since $|\bo{m}|_\infty\geq 1$ for any $\bo{m}\in \bb{M}^3\backslash \{\bo{0}\}$ and the zero-case is equal to zero. Among them, the \textit{cardinal sine} quadrature with weights $\omega_{\lambda_l}$'s and nodes $\lambda_l$'s writes \cite{braesshackbusch}
\begin{equation}
    \omega_{\lambda_l} := \frac{h}{1+e^{-lh}}, \hspace{0.2cm}\lambda_l := log(1+e^{lh}),\hspace{0.2cm}l\in [\![-N_{quad},N_{quad}]\!],
\end{equation}
where $h$ is a \textit{lifting factor} (asymptotically chosen as $h=\pi/\sqrt{2N_{quad}+1}$) and $N_{quad}$ is an integer. This rule is clearly non-optimal in terms of number of nodes for target precision but has the advantage of being given by explicit formula, hence directly obtained without solving any system. 
However, the best choice of $h$ may not be the asymptotic one, thus this $h$ is found by testing \textit{in a brute force way} different values of $h$ for each $N$, checking the error on possible arguments (that are $|\bo{m}|^2$, $\bo{m}\in \bb{M}^3\backslash \{\bo{0}\}$) and keeping the best each time in our code. Denoting $\Lambda$ this set of quadrature nodes, it follows that 
\begin{equation}
    \bm{\beta}_M = \bigg[\frac{1}{|\bo{m}|^2}\bigg]_{\bo{m}\in  \bb{M}^3}\approx \sum_{\lambda\in \Lambda}\omega_\lambda \left(\bigg[e^{-\lambda (m_0^2+m_1^2+m_2^2)}\bigg]_{\bo{m}\in  \bb{M}^3} - \underbrace{\big[\delta_{\bo{0}}(\bo{m})\big]_{\bo{m}\in  \bb{M}^3}}_{=:\bm{\delta}}\right),
\end{equation}
where $\delta_{\bo{0}}(\bo{m}) = \begin{cases}1\text{ if }\bo{m} = \bo{0}\\0\text{ otherwise}\end{cases}$. This implies that 
\begin{equation}
\label{eq::quaddecompalpham}
    \bm{\alpha}_M \approx \left(\sum_{\lambda\in \Lambda}\omega_\lambda \bm{\alpha}_{\lambda}\otimes \bm{\alpha}_{\lambda}\otimes \bm{\alpha}_{\lambda}\right) - c_\Lambda\bm{\delta}, \hspace{0.3cm}\bm{\alpha}_{\lambda} := \bigg[e^{-\left(\lambda+\frac{\pi ^2}{\xi^2}\right)m^2}\bigg]_{m\in \bb{M}},
\end{equation}
where $c_\Lambda = \sum_{\lambda\in \Lambda} \omega_\lambda$. In Fig. \ref{fig::quadskp}, we depicted compression rates for sinc-quadrature-based SKP approximation of $\bm{\alpha}_M$ according to various precisions. One may notice that for a given target precision $\epsilon$, the exhaustive quadrature selection method we use (i.e. the brute force one testing each possible argument) ensures that $|\bm{\alpha}_M - \left(\left(\sum_{\lambda\in \Lambda}\omega_\lambda \bm{\alpha}_{\lambda,0}\otimes \bm{\alpha}_{\lambda,1}\otimes \bm{\alpha}_{\lambda,2}\right) + c_\Lambda\bm{\delta}\right)|_\infty < \epsilon$. 

\begin{figure}
    \centering
    \includegraphics[width=0.5\linewidth]{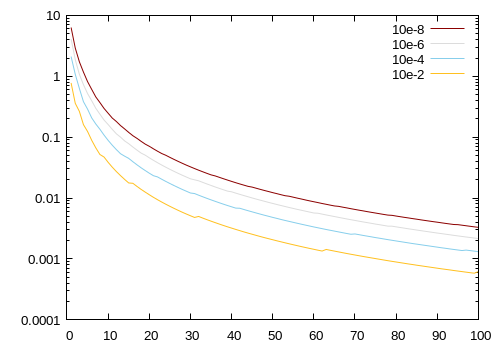}
    \caption{\label{fig::quadskp}For various $|\cdot |_\infty$ precisions in the cardinal sine quadrature (indicated with different colors) resulting in $N_{quad}$ points, compression rates are computed as $\frac{N_{quad}}{(2M+1)^2}$.}
\end{figure}

\begin{remark}
   The search of best quadrature for Eq. \eqref{eq::one_over_r_int} leads to design of optimal or quasi-optimal specific rules. Such search for a general maximal argument is out of the scope of this article. We still mention that tabulations available in the literature may help using quasi-optimal rules, for instance considering Yarvin-Rokhlin rules \cite{yarvinrokhlin}. Indeed, the rule with 27 points for precision $10^{-14}$ presented in this reference is suitable for $\bm{\alpha}_{12}$ and is (far) better than the best cardinal sine one found in our numerical experiments, using 279 nodes.
\end{remark}

Estimates for best quadrature of Eq. \eqref{eq::one_over_r_int} \cite{inversedistancequad} provide a bound for the optimal compression rate. Indeed, an upper bound of the error induced by optimal quadrature rule leads to 
\begin{equation}
    \bigg|\bm{\alpha}_M - \left(\sum_{\lambda\in \Lambda}\omega_\lambda \bm{\alpha}_{\lambda}\otimes \bm{\alpha}_{\lambda}\otimes \bm{\alpha}_{\lambda}\right) + c_\Lambda\bm{\delta}\bigg|_\infty \leq 16e^{-\pi\sqrt{|\Lambda^*|}},
\end{equation}
where $\Lambda^*$ is the index set for the \textit{best} quadrature. Hence, for a given precision $\epsilon > 0$,
\begin{equation}
\label{eq::estimatequad}
    16e^{-\pi\sqrt{|\Lambda^*|}} \leq \epsilon \Leftrightarrow |\Lambda^*|\geq \left(\frac{log\left(\frac{\epsilon}{16}\right)}{\pi}\right)^2.
\end{equation}
The cardinal $|\Lambda^*|$ being the number of terms in the SKP for optimal quadrature rules, a bound on the compression rate in this particular case is thus given by
\begin{equation}
    \bigg\lceil\frac{log\left(\frac{\epsilon}{16}\right)}{\pi}\bigg\rceil^2 (2M+1)^{-2}.
\end{equation}

\subsection{Multivariate polynomial interpolation on product grids}
\label{ss::multpolintprod}
Since the reciprocal kernel operates in the Fourier domain, one needs to switch between real space and the Fourier space. This can be done through the application of $\cc{F}_{\bb{M}^3,\bb{Y}}\in \bb{C}[\bb{M}^3,\bb{Y}]$, the Fourier matrix on particle distribution $\bb{Y}$, defined as
\begin{equation}
    \cc{F}_{\bb{M}^3,\bb{Y}}(\bo{m},\bo{y}) := e^{2i\pi \langle\bo{m},\bo{y}\rangle}.
\end{equation}
In the special case $\bb{Y}=\bb{Y}_0\times \bb{Y}_1\times \bb{Y}_2$, we directly have
\begin{equation}
\label{eq::targetexpformula}
    \cc{F}_{\bb{M}^3,\bb{Y}} = \underbrace{\bigg[e^{2i\pi my_0}\bigg]_{m \in \bb{M}, y_0\in \bb{Y}_0}}_{=:\cc{F}_{\bb{M},\bb{Y}_0}\in \bb{C}[\bb{M},\bb{Y}_0]}\otimes \underbrace{\bigg[e^{2i\pi my_1}\bigg]_{m \in \bb{M}, y_1\in \bb{Y}_1}}_{=:\cc{F}_{\bb{M},\bb{Y}_1}\in \bb{C}[\bb{M},\bb{Y}_1]}\otimes \underbrace{\bigg[e^{2i\pi my_2}\bigg]_{m \in \bb{M}, y_2\in \bb{Y}_2}}_{=:\cc{F}_{\bb{M},\bb{Y}_2}\in \bb{C}[\bb{M},\bb{Y}_2]}.
\end{equation}
In the general case, $\bb{Y}$ is not a product of one-dimensional grids. However, many fast numerical methods are based on interpolation of particle positions on multivariate cartesian grids defined as products of one-dimensional grids (see for instance Sect. \ref{s:pme}). This can be done through multivariate product Lagrange polynomial interpolation \cite{FONG20098712,ankh,pmeref} or use of $B$-splines \cite{spmeref}. We here consider the first option.

\subsubsection{General formula}
Let $2\leq L\in \bb{N}$, then for any set $\bb{L} := \{-1 = h_0^\diamond, \hspace{0.1cm}\hdots\hspace{0.1cm}, h_{L-1}^\diamond = 1\}$ of distinct points (the "diamond" notation points out that the points are interpolation nodes), we denote by $S_I(y^\diamond,y)$ the Lagrange interpolation polynomial over $I=[a,b]$ associated to $y^\diamond\in \bb{L}$ of the form
\begin{equation}
\label{eq::interp1d}
    S_I(y^\diamond,y) := \prod_{y^\diamond\neq z^\diamond\in\bb{L}}\frac{\phi_{I}(y)-z^\diamond}{y^\diamond-z^\diamond}, \hspace{0.2cm}\phi_{I}(y):=\frac{y-(b+a)/2}{(b-a)/2},\hspace{0.2cm}\forall y\in [a,b].
\end{equation}
We use the simplest general expression for these polynomials, even if barycentric formulation \cite{WANG2025113743,berrut2004barycentric,higham2004numerical} may be exploited to increase stability. From Eq. \eqref{eq::interp1d}, multivariate Lagrange interpolation polynomials over $\bb{I}=I_0\times I_2\times I_3$ are obtained on product grids $\bb{L}^3$ using a simple tensorised expression for any $\bo{y}\in \bb{I}$, $\bo{y}^\diamond\in\bb{L}^3$:
\begin{equation}
\label{eq::interpnd}
    S_\bb{I}(\bo{y}^\diamond,\bo{y}) := S_{I_0}(y_0^\diamond,y_0)\hspace{0.1cm}S_{I_1}(y_1^\diamond,y_1)\hspace{0.1cm}S_{I_2}(y_2^\diamond,y_2),\hspace{0.5cm}\bo{y}^\diamond = (y_0^\diamond,y_1^\diamond,y_2^\diamond).
\end{equation}
\begin{remark}
For the sake of clarity, we limited this presentation to the case in which the interpolation orders are the same along each dimension. The discussion can be easily extended to arbitrary orders.
\end{remark}
Any continuous function $f$ can then be approximated by
\begin{equation}
\label{eq::fapprox}
    f(\bo{y}) \approx \sum_{\bo{y}^\diamond\in \bb{L}^3}f\circ\phi^{-1}_{\bb{I}}(\bo{y}^\diamond)S_{\bb{I}}(\bo{y}^\diamond,\bo{y})=: \underbrace{\kk{I}_{\bb{L}^3}[f](\bo{y})}_{\substack{\text{Interpolation of }f\text{ on }\bo{y}\in \bb{I}\\\text{with interpolation nodes }\bb{L}^3}},
\end{equation}
where the inverse mapping $\phi_\bb{I}^{-1}$ is defined as
\begin{equation}
    \phi^{-1}_{\bb{I}}(\bo{y}^\diamond) := (\phi^{-1}_{I_0}(y_0^\diamond),\phi^{-1}_{I_1}(y_1^\diamond),\phi^{-1}_{I_2}(y_2^\diamond)).
\end{equation}

\subsubsection{Interpolation of complex exponential}
\label{ss::intrpcomplexexpo}
Following Eq. \eqref{eq::interpnd}, we obtain for $\bo{y}\in \bb{I} = I_0\times I_2\times I_3$
\begin{equation}
    e^{2i\pi\langle\bo{m},\bo{y}\rangle} \approx \prod_{p=0}^2\sum_{y^\diamond\in \bb{L}^3}e^{2i\pi m_p\phi^{-1}_{I_p}(y^\diamond)}S_{I_p}(y^\diamond,y_p),
\end{equation}
which writes in matrix form
\begin{equation}
    \bigg[e^{2i\pi\langle\bo{m},\bo{y}\rangle}\bigg]_{\bo{m}\in \bb{M}^3} \approx \left(\bigotimes_{p=0}^2\underbrace{\bigg[e^{2i\pi m \phi^{-1}_{I_p}(y^\diamond)}\bigg]_{m\in \bb{M},y^\diamond\in \bb{L}}}_{=:\cc{F}_{\bb{M},\phi^{-1}_{I_p}(\bb{L})}}\right)\bigg[S_{\bb{I}}(\bo{y}^\diamond,\bo{y})\bigg]_{\bo{y}^\diamond\in \bb{L}^3}.
\end{equation}

This allows us to recover the target formula in Eq. \eqref{eq::targetexpformula} (i.e. a tensorised Fourier matrix). The choice of one-dimensional interpolation grid plays a significant role in the accuracy of the interpolation process. Classical one-dimensional choice (hence possibly extended to the multivariate case through tensorisation) lies in the \textit{Chebyshev nodes}, leading to small Lebesgue constant \cite{10.1007/BF01933407} on one-dimensional settings. However, many numerical method are built upon equispaced nodes \cite{pmeref,ankh} to speed up computations. Explicit convergence requirements are provided in the literature for this kind of interpolation when dealing with complex exponential, recalled here:
\begin{theorem}
\label{thm::trefethen}
\textbf{(Trefethen and Weideman, 1991 \cite{TREFETHEN1991247} Thm. 1)} Let $p_n$ be the polynomial interpolant on an equispace grid to $e^{i\eta x}$ in the points $0,1,...,n$, $\eta\in \bb{R}$, $x\in [-1,1]$, then 
\begin{equation}
    ||e^{i\eta x} - p_n(x)||_{[0,n]} \rightarrow 0\text{ as }n\rightarrow +\infty
\end{equation}
if and only if $\eta \leq \frac{\pi}{3}$, where $||\cdot ||_{[0,n]}$ denotes the supremum norm on $[0,n]$.
\end{theorem}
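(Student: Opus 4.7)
The plan is to reduce the convergence question to a problem in logarithmic potential theory via the Hermite remainder formula, and then to locate the critical $\eta$ through a saddle-point computation. First, I would represent the interpolation error by the contour integral
\begin{equation*}
    e^{i\eta x}-p_n(x)=\frac{1}{2\pi i}\oint_\Gamma \frac{\omega_{n+1}(x)}{\omega_{n+1}(z)}\,\frac{e^{i\eta z}}{z-x}\,dz,\qquad \omega_{n+1}(z):=\prod_{k=0}^n(z-k),
\end{equation*}
for any simple closed curve $\Gamma\subset\bb{C}$ enclosing $[0,n]$ and avoiding $x$. The asymptotic behaviour of $|\omega_{n+1}|$ is then controlled by the logarithmic potential of the uniform measure on $[0,1]$: rescaling $z=nw$ and passing to the Riemann-sum limit $\tfrac{1}{n}\sum_{k=0}^n\log|w-k/n|\to\int_0^1\log|w-s|\,ds$ yields
\begin{equation*}
    \tfrac{1}{n}\log|\omega_{n+1}(nw)|\;\longrightarrow\;\log n + g(w),\qquad g(w):=\operatorname{Re}\bigl[w\log w-(w-1)\log(w-1)\bigr]-1.
\end{equation*}

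Since $|e^{i\eta z}|=e^{-\eta\,\operatorname{Im} z}$, on the rescaled contour $\tilde\Gamma:=\Gamma/n$ the integrand carries the exponential rate $n\bigl(g(s)-h(w)\bigr)$ with $h(w):=g(w)+\eta\,\operatorname{Im}(w)$ and $s=x/n\in[0,1]$. As $g$ attains its maximum value $-1$ on $[0,1]$ only at the endpoints $s=0,1$, I would reduce uniform convergence on $[0,n]$ to the existence of a closed curve $\tilde\Gamma$ around $[0,1]$ along which $\min_{\tilde\Gamma}h>-1$. The relevant saddle of $h$ is obtained from $\nabla h=0$: writing $g=\operatorname{Re} F$ with $F(w)=w\log w-(w-1)\log(w-1)-1$, the Cauchy--Riemann equations give $F'(w)=\log\bigl(w/(w-1)\bigr)=i\eta$, hence
\begin{equation*}
    w_\star\;=\;\frac{1}{1-e^{-i\eta}}\;=\;\frac{1}{2}-\frac{i}{2}\cot\!\Big(\tfrac{\eta}{2}\Big).
\end{equation*}
Substituting back into $F$ and using $e^{i\eta}-1=2\sin(\eta/2)\,e^{i(\eta+\pi)/2}$, the algebra collapses to the clean formula $h(w_\star)=-1-\log\bigl(2\sin(\eta/2)\bigr)$, so the threshold $h(w_\star)=-1$ becomes $2\sin(\eta/2)=1$, i.e.\ $\eta=\pi/3$.

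For $\eta<\pi/3$ I would close the convergence direction by taking $\tilde\Gamma$ as a level curve of $g$ slightly above the critical level (on which $\min h>-1$), turning the Hermite estimate into uniform exponential decay $|e^{i\eta x}-p_n(x)|\le C\,e^{-cn}$ on $[0,n]$. The main obstacle is the converse: for $\eta>\pi/3$ one must actually exhibit a sequence $x_n\in[0,n]$ on which the error genuinely diverges, not merely observe that the global upper bound fails. My plan there is a steepest-descent analysis at $x_n\approx n/2$ (where $g$ is most negative on $[0,1]$, so Runge's phenomenon is strongest), deforming $\Gamma$ through the saddle $w_\star$ and extracting a leading contribution of size $n^{-1/2}\exp\!\bigl(n\log(2\sin(\eta/2))\bigr)$, which grows whenever $2\sin(\eta/2)>1$. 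Making this rigorous requires upgrading the Riemann-sum limit for $\tfrac{1}{n}\log|\omega_{n+1}|$ to a uniform Stirling-type asymptotic expansion (exploiting $\omega_{n+1}(z)=\Gamma(z+1)/\Gamma(z-n)$), which is the principal technical ingredient of the proof.
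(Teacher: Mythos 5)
First, a point of comparison: the paper does not prove this statement at all --- it is quoted verbatim as Theorem~1 of Trefethen and Weideman \cite{TREFETHEN1991247} and used as a black box to justify the constraint $r_B \leq M^{-1}$. So there is no internal proof to measure you against; your sketch is in effect a reconstruction of the original argument, and indeed the route you take (Hermite contour-integral remainder, the logarithmic potential $g(w)=\operatorname{Re}[w\log w-(w-1)\log(w-1)]-1$ of the uniform measure on $[0,1]$, and a saddle-point computation for the tilted potential $h(w)=g(w)+\eta\operatorname{Im}w$) is the standard potential-theoretic proof. Your computation of the saddle $w_\star=\tfrac12-\tfrac{i}{2}\cot(\eta/2)$ and of the saddle value $h(w_\star)=-1-\log(2\sin(\eta/2))$ is correct, and equating it to $\max_{[0,1]}g=-1$ does produce the critical constant $2\sin(\eta/2)=1$, i.e.\ $\eta=\pi/3$. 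The convergence direction for $\eta<\pi/3$ is set up correctly.

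There is, however, a genuine error in your plan for the divergence direction. You propose to exhibit divergence at $x_n\approx n/2$ "where $g$ is most negative on $[0,1]$, so Runge's phenomenon is strongest." This is backwards. The pointwise exponential rate of the error at $x=ns$ is $g(s)-h(w_\star)$: the error is \emph{proportional} to $|\omega_{n+1}(x)|\sim e^{n(\log n+g(s))}$, so it is largest where $g$ is \emph{largest}, namely near the endpoints $s=0,1$ (the classical Runge picture). At the midpoint your own formulas give rate $g(1/2)-h(w_\star)=(-1-\log 2)+(1+\log(2\sin(\eta/2)))=\log(\sin(\eta/2))<0$ for all $\eta\in(0,\pi)$, i.e.\ the interpolant actually \emph{converges} at $x_n\approx n/2$ even when $\eta>\pi/3$; the growth rate $\exp(n\log(2\sin(\eta/2)))$ you quote is the endpoint rate ($g=-1$), not the midpoint one. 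The steepest-descent construction must therefore be carried out at points $x_n$ a bounded distance from $0$ or $n$ (between the first few nodes, where the local maxima of $|\omega_{n+1}|$ attain the $e^{n(\log n-1)}$ scale), and one must check that the oscillatory prefactor does not vanish along the chosen subsequence. A second, smaller gap: the theorem asserts convergence for $\eta\leq\pi/3$ including equality, where $h(w_\star)=-1$ exactly matches $\max_{[0,1]}g$; your leading-order exponential comparison is inconclusive there, and the marginal case needs the subexponential (algebraic) factors that your uniform Stirling expansion would have to supply.
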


To ensure the convergence, we thus assume that 
\begin{equation}
\displaystyle\mathop{max}_{\bo{y}\in B}|\bo{y}-\bo{c}|\leq M^{-1},
\end{equation}
with $\bo{c}$ the center of the computational box $B$, that is $B$ has a radius $r_B$ smaller than $M^{-1}$. Indeed, if the box is not centered at zero, then each $\bo{y} = \bo{c}+\hat{\bo{y}}$ can be decomposed into the center of the box and the its relative position $\hat{\bo{y}}$. Then,
\begin{equation}
    e^{-2i\pi\langle\bo{m},\bo{y}\rangle} = \underbrace{e^{-2i\pi\langle\bo{m},\bo{c}\rangle}}_{constant}\left(\prod_{k=0}^2\underbrace{e^{-2i\pi\langle m_k,\hat{y}_k\rangle}}_{\text{1D interpolation}}\right)
\end{equation}
and only the right term has to be interpolated. This condition, according to Thm. \ref{thm::trefethen} is only sufficient but not necessary. Moreover, using a bit more restrictive condition, one may provide simple error estimates in the case of equispaced interpolation nodes.

\begin{theorem}
\label{thm::convratemultcomp}\textbf{(Convergence rate on multivariate complex exponential interpolation)}
If $|\bo{m}|_\infty\leq M$, $r_B\leq \left(\pi M\right)^{-1}$ and $L\in \bb{N}$, $L > 1$, and the interpolation grid is a cartesian grid $\Xi = \left(\frac{2r_B}{L-1}[\![0,L-1]\!]^3\right)$, then $\exists \nu\in (0,1)$ such that
\begin{equation}
    \Bigg|e^{-2i\pi\langle\bo{m},\bo{y}\rangle}-\kk{I}_{\Xi}\Big[e^{-2i\pi\langle\bo{m},\cdot\rangle}\Big]\left(\bo{y}\right)\Bigg|_\infty \leq \cc{O}\left(\nu^{L}\right).
\end{equation}
\end{theorem}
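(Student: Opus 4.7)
The plan is to exploit the tensor-product structure of the grid $\Xi = \Xi_0 \times \Xi_1 \times \Xi_2$ together with the separability
\begin{equation*}
e^{-2i\pi\langle \bo{m},\bo{y}\rangle} \;=\; \prod_{k=0}^{2} e^{-2i\pi m_k y_k},
\end{equation*}
in order to reduce the trivariate estimate to three univariate ones. Since $\kk{I}_{\Xi}$ factors as $\kk{I}_{\Xi_0}\otimes \kk{I}_{\Xi_1}\otimes \kk{I}_{\Xi_2}$ on such separable functions, a standard telescoping identity of the form $\mathrm{Id}-\bigotimes_k \kk{I}_{\Xi_k} = \sum_k (\bigotimes_{j<k}\kk{I}_{\Xi_j})\otimes (\mathrm{Id}-\kk{I}_{\Xi_k})\otimes (\bigotimes_{j>k}\mathrm{Id})$ bounds the multivariate $L^\infty$ error by a sum of univariate errors, each amplified by uniformly bounded Lebesgue-type factors (which are $\cc{O}(1)$ on the directions where interpolation converges, since $|e^{i\theta}|=1$).

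For each univariate subproblem, the affine rescaling $\phi_{I_k} : [-r_B,r_B]\to [-1,1]$ turns the target into $h(x) = e^{i\eta x}$ with $\eta = -2\pi m_k r_B$, so that $|\eta| \leq 2\pi M r_B \leq 2$ by the hypothesis $r_B \leq (\pi M)^{-1}$. The interpolation error on $L$ equispaced nodes of $[-1,1]$ is then controlled via Hermite's contour-integral remainder formula
\begin{equation*}
h(x) - p_{L-1}(x) \;=\; \frac{1}{2\pi i}\oint_{|z|=\rho}\frac{h(z)\,\omega_L(x)}{\omega_L(z)\,(z-x)}\,dz, \qquad \rho > 1,
\end{equation*}
where $\omega_L$ is the nodal polynomial. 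Combining the bound $|h(z)|\leq e^{|\eta|\rho}$ (exploiting that $h$ is entire of exponential type $|\eta|$) with Pommerenke/logarithmic-potential asymptotics for $\omega_L$ on the equispaced distribution (which yield $|\omega_L(z)|^{1/L}\to \tfrac{1}{2}\exp(U(z))$ for the equilibrium potential $U$ of $[-1,1]$ for the uniform measure) gives, after optimising $\rho$, a bound of the form $C\,\nu(\eta)^L$. The constraint $|\eta|\leq 2$ places us strictly below the Runge threshold identified in Theorem~\ref{thm::trefethen}, so that such an optimal $\rho>1$ exists with $\nu(\eta)\in (0,1)$, and a uniform $\nu\in (0,1)$ can be extracted by taking the supremum over $|\eta|\leq 2$. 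Reassembling the three univariate bounds through the telescoping inequality yields the desired $\cc{O}(\nu^L)$ estimate.

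The main difficulty lies in the quantitative behaviour of $\omega_L$ on equispaced grids: unlike the Chebyshev case, Runge's phenomenon causes $\|\omega_L\|_{[-1,1]}$ and $\min_{|z|=\rho}|\omega_L(z)|$ to grow at different exponential rates, and the geometric convergence window is only a narrow interval of admissible $|\eta|$. The entire quantitative content of the theorem reduces to pinning down this window and verifying that the hypothesis $r_B \leq (\pi M)^{-1}$ (equivalently $|\eta|\leq 2$) lies safely inside it. Once the univariate geometric rate is established, the multivariate extension via the tensor telescoping and the uniformity in $\bo{m}$ (given $|\bo{m}|_\infty\leq M$) are essentially routine.
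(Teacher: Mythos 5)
Your proposal is correct in substance but takes a genuinely different route from the paper for the univariate estimate. The paper's proof is entirely elementary: it applies the classical Lagrange remainder formula, bounds the $L$-th derivative of $e^{2i\pi Mx}$ by $(2\pi M)^L$, bounds the equispaced nodal polynomial explicitly by $\frac{(2L)!h^L}{4^L L!}$ with $h=2r_B/L$, and after the combinatorial estimate $\frac{(2L)!}{L!^2L^L}\leq 2$ arrives at the explicit rate $\nu=\pi M r_B$; the multivariate case is then dispatched by citing a lemma that bounds the tensor-product error by a finite sum of one-dimensional errors (essentially your telescoping identity, which you write out explicitly). Your univariate argument instead invokes the Hermite contour-integral remainder and logarithmic-potential asymptotics of $\omega_L$ for the uniform node distribution, optimising the contour radius $\rho$. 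Both yield the claimed $\cc{O}(\nu^L)$; yours is heavier machinery but would generalise to other node families and to functions analytic only in a neighbourhood of the interval, while the paper's buys an explicit, self-contained value of $\nu$. Two remarks on your version. First, your emphasis on the ``narrow window'' around the Runge threshold is somewhat misplaced here: the Trefethen--Weideman threshold $\eta\leq\pi/3$ is a per-grid-point frequency condition, whereas after your rescaling the frequency $|\eta|\leq 2$ on $[-1,1]$ is \emph{fixed} while $L\to\infty$, so the target is entire of bounded exponential type and the convergence is in fact super-geometric ($\cc{O}((C/L)^L)$, as both your optimised $\rho$ and the paper's $L^{-L}$ factor reveal); no delicate window analysis is needed. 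Second, in the telescoping step you should be explicit that the amplification factors are sup-norms of the interpolants of the \emph{specific} unimodular exponentials, which are $1+o(1)$ by the univariate error bound itself --- they are not the full equispaced Lebesgue constants, which grow like $2^L$ and would ruin the argument if invoked naively; your parenthetical ``since $|e^{i\theta}|=1$'' points at the right fact but deserves a sentence.
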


\begin{proof}
The proof is inspired by the one provided in \cite{doi:10.1137/22M1472930}. We start from the one-dimensional worst case scenario, that is when we evaluate the complex exponential on the largest mode $M$. Through classical Lagrange error estimates, for any $x\in [0,2r_B]$, one has
\begin{equation}
\begin{aligned}
 \big|e^{-2i\pi M x} -\kk{I}_{[\![0,L-1]\!]}\Big[e^{-2i\pi M \cdot}\Big]\left(x\right)\big| &\leq \frac{\big|\partial_x^{L}\left(e^{2i\pi Mx}\right)\big|}{L!}\Bigg|\prod_{l=0}^{L-1}\left(x-\frac{2r_Bl}{L-1}\right)\Bigg|\\
 \left(\text{using }h:= \frac{2r_B}{L}\right)\hspace{1cm}&\leq \frac{(2\pi |i|M)^{L}|e^{2i\pi M x}|}{L!}\frac{(2L)!h^L}{4^LL!}\\
 &\leq \left(\frac{\pi Mh}{2}\right)^L\frac{(2L)!}{L!^2} = \left(\frac{\pi Mr_B}{L}\right)^L\frac{(2L)!}{L!^2}.
 \end{aligned}
\end{equation}
Using $\nu := \pi M r_B$, one obtains
\begin{equation}
    \big|e^{-2i\pi  x} -\kk{I}_{[\![0,L-1]\!]}\Big[e^{-2i\pi M \cdot}\Big]\left(x\right)\big|\leq  \nu^L\frac{(2L)!}{L!^2L^L}
\end{equation}

Then, the last quotient may be upper bounded by a small constant:

\begin{equation}
    \frac{(2L)!}{L!^2L^L} = \prod_{k=0}^{L-1}\frac{(2L-k)(L-k)}{(L-k)^2L}=\prod_{k=0}^{L-1}\frac{(2L-k)}{L(L-k)}\leq \frac{2^L}{L!} \leq 2,
\end{equation}
leading to $\big|e^{iM x} -\kk{I}_L[e^{iM x}](x)\big| \leq  \cc{O}\left(\nu^{L}\right)$ that tends to zero if $\nu$ is smaller than $1$, which is ensured by the theorem assumptions. For any other mode $m<M$, it is easy to see that this estimate still holds, with $\nu'<\nu$, resulting in better convergence speed.

For the multivariate case, we use Thm 2.1 in \cite{multivariate_interpolation}, stating that the overall bound can be computed as a finite sum of interpolation errors in each dimension and we recover the same estimate than for the one-dimensional case.
\end{proof}

As a direct consequence, there are at least two ways for reducing the error: 
\begin{itemize}
    \item Increase the interpolation order, that could lead to numerical instability for high interpolation orders when using equispaced nodes. Indeed, beyond a certain order, rounding errors tend to render the interpolation unusable \cite{TREFETHEN1991247}. Fortunately, for double precision arithmetic as in our case, relatively small interpolation orders ($\leq 10$ in each dimension) are sufficient in practice.
    \item Reduce the size of the computational box, that paves the way to a \textit{divide-and-conquer} strategy. This approach will be presented later in this article.
\end{itemize}

Most importantly, following classical interpolation-based approach for fast summations \cite{HACKBUSCH2002129,FONG20098712,giebermann}, Eq. \eqref{eq::fapprox} applied to multivariate complex exponential gives:
\begin{equation}
    \sum_{\bo{y}\in\bb{Y}}\bigg[e^{2i\pi\langle\bo{m},\bo{y}\rangle}\bigg]_{\bo{m}\in \bb{M}^3}q(\bo{y}) \approx \left(\bigotimes_{p=0}^2 \cc{F}_{\bb{M},\phi^{-1}_{B_p}(\bb{L})}\right)\underbrace{\bigg[S_{B}(\bo{y}^\diamond,\bo{y})\bigg]_{\bo{y}^\diamond\in \bb{L}^3,\bo{y}\in \bb{Y}}}_{=:\cc{S}_{\bb{L},\bb{Y}}}\big[q(\bo{y})\big]_{\bo{y}\in \bb{Y}},
\end{equation}
for our computational box $B = B_0\times B_1\times B_2$. Notice that, provided $\bo{q}\in \bb{R}[Y]$, the conjugate is obtained as follows
\begin{equation}
\label{eq::interpkrons}
    \sum_{\bo{y}}\bigg[e^{-2i\pi\langle\bo{m},\bo{y}\rangle}\bigg]_{\bo{m}\in \bb{M}^3}q(\bo{y}) \approx \left(\bigotimes_{p=0}^2 \cc{F}_{\bb{M},\phi^{-1}_{B_p}(\bb{L})}\right)^*\cc{S}_{\bb{L},\bb{Y}}\bo{q}.
\end{equation}
These two last formulas exhibit a tensor structure thanks to the Kronecker matrix product particularly useful for fast methods.

\subsection{Compressed real Fourier matrices}
\label{ss::symfourier}
We here describe the compressed format on real number (instead of complex ones) used in our code. Since $\alpha(|\mathbf{m}|)$ is rotationally-invariant, introducing $\mathcal{D}_3$ as the rotation group preserving the cube:
\begin{equation*}
    \begin{aligned}
        H(\bo{x},\bo{y}) &= \sum_{\mathbf{m}\in \bb{Z}^3}\alpha(|\bo{m}|)e^{2i\pi \langle\bo{m},\bo{x}-\bo{y}\rangle}= \sum_{\mathbf{m}\in \left(\bb{Z}^3/\mathcal{D}_3\right)}\sum_{g\in \mathcal{D}_3(\bo{m})}\alpha(\underbrace{|g\cdot \bo{m}|)}_{=|\bo{m}|}e^{2i\pi \langle g\cdot \bo{m},\bo{x}-\bo{y}\rangle}\\
        &= \sum_{\mathbf{m}\in \left(\bb{Z}^3/\mathcal{D}_3\right)}\alpha(|\bo{m}|)\sum_{g\in \mathcal{D}_3(\bo{m})}e^{2i\pi \langle g\cdot \bo{m},\bo{x}-\bo{y}\rangle},
    \end{aligned}
\end{equation*}
where we used the notation $\bb{Z}^3/\mathcal{D}_3$ to refer to the fundamental domain of $\bb{Z}^3$ under the action $g$'s of $\mathcal{D}_3$, that is the minimal subset $S\subset \bb{Z}^3$ such that $\cc{D}_3\cdot S = \bb{Z}^3$ (we recover $\bb{Z}^3$ from $S$ by means of actions of $\cc{D}_3$), and $\cc{D}_3(\bo{m})$ for the subset of $\cc{D}_3$ whose action on $\bo{m}$ actually modify $\bo{m}$ (in order to deal with orbit with different cardinal). It is thus possible to only work on the fundamental domain $\bb{Z}^3/\mathcal{D}_3$. However, we will focus on an abelian subgroup $\mathcal{A}_3\equiv \left(\frac{\bb{Z}}{2\bb{Z}}\right)^3 < \mathcal{D}_3$ since it fits with the tensorised structure of multivariate FFTs. Notice that such exploitation of $\cc{A}_3$ already appeared in the literature \cite{simonett}. Moreover, possible actions of $\cc{A}_3$ can be represented by the matrices
\begin{equation*}
    \begin{bmatrix}
    a_0 & 0 & 0\\ 0 & a_1 & 0\\ 0 & 0 & a_2
    \end{bmatrix},\hspace{0.2cm}\text{with }a_j = \pm 1. 
\end{equation*}
We thus obtain, by splitting the exponential,
\begin{equation}
\label{eq::abelian_to_cos}
    \begin{aligned}
        \sum_{g\in \mathcal{A}_3(\bo{m})}e^{2i\pi \langle g\cdot \bo{m},\bo{x}-\bo{y}\rangle}&= \displaystyle\prod_{j=0}^2\left(\begin{cases}\displaystyle\sum_{a_j=\pm 1}e^{2i\pi a_j\cdot m_j(x_j-y_j)}&\text{if }m_j\neq 0\\\hspace{2cm}1&\text{otherwise.}\end{cases}\right).
    \end{aligned}
\end{equation}
Since, when $j\neq 0$, we have
\begin{equation*}
    \sum_{a_j=\pm 1}e^{2i\pi a_j\cdot m_j(x_j-y_j)} = e^{2i\pi  m_j(x_j-y_j)}+\overline{e^{2i\pi  m_j(x_j-y_j)}} = \underbrace{2\frak{Re}\Big\{e^{2i\pi m_j(x_j-y_j)}\big\}}_{= 2cos(2\pi m_j(x_j-y_j))},
\end{equation*}
then Eq. \eqref{eq::abelian_to_cos} becomes
\begin{equation*}
    \sum_{g\in \mathcal{A}_3(\bo{m})}e^{2i\pi \langle g\cdot \bo{m},\bo{x}-\bo{y}\rangle} = \prod_{j=0}^2\left(2^{\delta_0(m_j)}cos(2\pi m_j(x_j-y_j))\right)
\end{equation*}
with $\delta_{\bb{R}^*}(m) = 0$ if $m=0$ and $\delta_{\bb{R}^*}(m) = 1$ if $m\neq 0$. This expression is not separated in terms of $\bo{x}$ and $\bo{y}$ variables, so we exploit the following better suited equality:
\begin{equation}
\begin{aligned}
    \sum_{g\in \mathcal{A}_3(\bo{m})}e^{2i\pi \langle g\cdot \bo{m},\bo{x}-\bo{y}\rangle} = \prod_{j=0}^22^{\delta_{\bb{R}^*}(m_j)}&\Big(\underbrace{cos(2\pi m_j(x_j))}_{=:c_{m_j}(x_j)}\underbrace{cos(2\pi m_j(y_j))}_{=:c_{m_j}(y_j)}\\&+\underbrace{sin(2\pi m_j(x_j))}_{=:s_{m_j}(x_j)}\underbrace{sin(2\pi m_j(y_j))}_{=:s_{m_j}(y_j)}\Big).
\end{aligned}
\end{equation}
Written in separated form, one gets
\begin{equation}
    \sum_{g\in \mathcal{A}_3(\bo{m})}e^{2i\pi \langle g\cdot \bo{m},\bo{x}-\bo{y}\rangle} =\prod_{j=0}^22^{\delta_{\bb{R}^*}(m_j)}\underbrace{\begin{bmatrix}
    c_{m_j}(x_j) & s_{m_j}(x_j)
    \end{bmatrix}}_{=:w(m_j,x_j)^T}\begin{bmatrix}
    c_{m_j}(y_j) \\ s_{m_j}(y_j)
    \end{bmatrix}.
\end{equation}
In the general case this factorisation is only true for single particle interaction. Fortunately, this result extends to any product grid. Indeed, let $\bb{X} = \bb{X}_0\times \bb{X}_1\times \bb{X}_2$, $\bb{Y} = \bb{Y}_0\times \bb{Y}_1\times \bb{Y}_2$, the following decomposition holds
\begin{equation}
\label{eq::kronw}
\begin{aligned}
\Big[\sum_{g\in \cc{A}_3(\bo{m})}e^{2i\pi \langle g\cdot \bo{m},\bo{x}-\bo{y}\rangle}\Big]_{\bo{x}\in \bb{X},\bo{y}\in \bb{Y}} &=  \Bigg[\prod_{j=0}^22^{\delta_{\bb{R}^*}(m_j)}w(m_j,x_j)^Tw(m_j,y_j)\Bigg]_{\bo{x}\in \bb{X},\bo{y}\in \bb{Y}}\\
    &= \bigotimes_{j=0}^2\cc{W}_{m_j,\bb{X}_j}^*\cc{W}_{m_j,\bb{Y}_j}
\end{aligned}
\end{equation}
using 
\begin{equation}
    \cc{W}_{m,\bb{Y}_j} := \Bigg[\sqrt{2^{\delta_{\bb{R}^*}(m)}}w(m,y)\Bigg]_{y\in \bb{Y}_j} \in \bb{R}^{2\times |\bb{Y}_j|},
\end{equation}
These matrices are the building blocks of the structure drawn in Thm. \ref{thm::kronpme}. They are real and only consider positive Fourier modes. Hence, this reduces the storage cost for their assembly. The explicit formulas allow their \textit{on-the-fly} computation.

\subsection{Overall compression}
\label{ss::overallcomp}
It is now possible to prove Thm. \ref{thm::kronpme}. 
It has been shown in Sect. \ref{ss:decompalpha} that we can approximate $\alpha$ (see Eq. \eqref{eq::quaddecompalpham}) in such a way that
\begin{equation}
    \alpha(|\bo{m}|) \approx \left(\sum_{\lambda\in \Lambda}\omega_\lambda\prod_{j=0}^{2}\alpha_{\lambda}(m_j)\right)-c_\lambda\delta_\bo{0}(\bo{m}),
\end{equation}
where $\delta_\bo{0}(\bo{m})=\begin{cases}1\text{ if }\bo{m}=\bo{0}\\0\text{ otherwise}\end{cases}$. Let $\cc{S}_\bb{X}\in \bb{R}[\bb{G},\bb{X}]$ and $\cc{S}_\bb{Y}\in \bb{R}[\bb{H},\bb{Y}]$ be the matrices formed by Lagrange interpolation polynomials on product interpolation grids $\bb{G}=\prod_{j=0}^{2}\bb{G}_j$ (resp. $\bb{H}=\prod_{j=0}^{2}\bb{H}_j$) over the computational box $B$, i.e. 
\begin{equation}
\label{eq::sxsyfromB}
    \cc{S}_\bb{X}:=\begin{bmatrix}
    S_{B}(\bo{x}^\diamond,\bo{x})
    \end{bmatrix}_{\bo{x}^\diamond\in\bb{G},\bo{x}\in \bb{X}},\hspace{0.5cm}\cc{S}_\bb{Y}:=\begin{bmatrix}
    S_{B}(\bo{y}^\diamond,\bo{y})
    \end{bmatrix}_{\bo{y}^\diamond\in\bb{H},\bo{y}\in \bb{Y}}.
\end{equation}
If $2r_{B}<M^{-1}$, following Eq. \eqref{eq::kronw} and Eq. \eqref{eq::interpkrons}, we have
\begin{equation}
\label{ss::proofmain1}
\begin{aligned}
\Big[\sum_{\bo{m}\in\bb{M}^3}\alpha(|\bo{m}|)e^{2i\pi \langle \bo{m},\bo{x}-\bo{y}\rangle}\Big]_{\bo{x}\in \bb{X},\bo{y}\in \bb{Y}} \\\approx  \cc{S}_{\bb{X}}^*\underbrace{\left(\sum_{\bo{m}\in\bb{M}^{3}_+}\left(\sum_{\lambda\in \Lambda}\omega_\lambda\prod_{j=0}^{2}\alpha_{\lambda}(m_j)\right)\bigotimes_{j=0}^2\cc{W}_{m_j,\bb{G}_j}^*\cc{W}_{m_j,\bb{H}_j}\right)}_{=(*)}\cc{S}_{\bb{Y}}\\
-\underbrace{\sum_{\bo{m}\in\bb{M}^3} c_\Lambda\delta_\bo{0}(\bo{m})\Big[e^{2i\pi \langle \bo{m},\bo{x}-\bo{y}\rangle}\Big]_{\bo{x}\in \bb{X},\bo{y}\in \bb{Y}}}_{=(**)}
\end{aligned}
\end{equation}
where $\bb{M}_+ := [\![0,M]\!]$. We restricted ourselves to positive $\bo{m}$'s since the other modes can be eliminated using symmetries, see Sect. \ref{ss::symfourier}. 

The term $(*)$ in Eq. \eqref{ss::proofmain1} can be turned into
\begin{equation}
\begin{aligned}
    (*) &= \sum_{\lambda\in \Lambda}\omega_\lambda\bigotimes_{j=0}^2\left(\sum_{m_j\in\bb{M}_+}\alpha_{\lambda}(m_j)\cc{W}_{m_j,\bb{G}_j}^*\cc{W}_{m_j,\bb{H}_j}\right).
\end{aligned}
\end{equation}
Introducing $\cc{V}[\lambda]_{h_j}\in \bb{R}\big[\bb{M}_+\big]$ as
\begin{equation}
    \Big[\cc{V}[\lambda]_{\bb{H}_j}\Big]_{m} := \sqrt[3]{\omega_\lambda}\sqrt{\alpha_{\lambda,j}(m)}\cc{W}_{m,\bb{H}_j},
\end{equation}
we end up with
\begin{equation}
\label{eq::VtV}
\begin{aligned}
(*) &\approx  \sum_{\lambda\in \Lambda}\bigotimes_{j=0}^2\left(\cc{V}[\lambda]_{\bb{G}_j}^*\cc{V}[\lambda]_{\bb{H}_j}\right) =:  \sum_{\lambda\in \Lambda}\bigotimes_{j=0}^2\cc{A}^{(j)}_{\bb{G},\bb{H}}(\lambda).
\end{aligned}
\end{equation}
The second term (**) in Eq. \eqref{ss::proofmain1} vanishes at any non-zero $\bo{m}$, leaving us with
\begin{equation}
\begin{aligned}
    (**) =c_\Lambda\delta_\bo{0} (\bo{0}) \Big[e^{2i\pi \langle \bo{m},\bo{x}-\bo{y}\rangle}\Big]_{\bo{x}\in \bb{X},\bo{y}\in \bb{Y}} = c_\lambda\bo{1}\bo{1}^T,
\end{aligned}
\end{equation}
recalling that $\bo{1}$ denotes a column vector full of ones. Finally, combining these approximations with Eq. \eqref{ss::proofmain1} provides the matrix form drawn in Thm. \ref{thm::kronpme}
\begin{equation}
\label{eq::formfinale}
    \Bigg[\sum_{\bo{m}\in\bb{M}^3}\alpha(|\bo{m}|)e^{2i\pi \langle \bo{m},\bo{x}-\bo{y}\rangle}\Bigg]_{\bo{x}\in \bb{X},\bo{y}\in \bb{Y}}  \approx\left(\sum_{\lambda\in \Lambda}  \cc{S}_{\bb{X}}^* \left(\bigotimes_{j=0}^2\left(\cc{A}^{(j)}_{\bb{G},\bb{H}}(\lambda)\right)\right)\cc{S}_{\bb{Y}}
    \right)- c_\Lambda\bo{1}\bo{1}^T.
\end{equation}
In this expression, each approximation has an error that can be controlled (through interpolation order, quadrature accuracy or SVD truncation error).

\section{Fast Kronecker products evaluation over grids}
\label{s::fkpeog}
This section is dedicated to a fast parallel method designed to handle Kronecker products over product grids in an efficient way. This part can be read independently of the rest of the article. We detail in Sect. \ref{ss::settingkron} the context of this particular Kronecker product. Then, the usual fast Kronecker product is described in Sect. \ref{ss::fkp}. Its parallel counterpart is provided in Sect. \ref{ss::paralgokron}.
\subsection{Setting}
\label{ss::settingkron}
The main goal is to compute
\begin{equation}
    \bo{z} := \cc{A}\bo{q}
\end{equation}
for a matrix $\cc{A}$ defined as in the following. Suppose that we have a collection of matrices $\{\cc{A}^{(p)}_{i,j}\}_{(p,i,j)}$ such that $\cc{A}^{(p)}_{i,j}\in \mathbb{C}^{M_i^{(p)}\times N_j^{(p)}}$ and define
\begin{equation}
\label{eq::formegeneraledecompblockron}
    \cc{A}^{(p)} = \begin{bmatrix}
    \cc{A}^{(p)}_{0,0}&\hdots&\cc{A}^{(p)}_{0,K-1}\\
    \vdots&\ddots&\vdots\\
    \cc{A}^{(p)}_{K-1,0}&\hdots&\cc{A}^{(p)}_{K-1,K-1}
    \end{bmatrix}\in \mathbb{C}^{M^{(p)}\times N^{(p)}},
\end{equation}
where $M^{(p)} = \sum_{i}M_i^{(p)}$ and $N^{(p)} = \sum_{j}N_j^{(p)}$. We want to efficiently compute the matrix-vector product involving the matrix
\begin{equation}
\label{eq::aasproductkron}
    \cc{A} := \bigotimes_p \cc{A}^{(p)} \in \mathbb{C}^{M\times N}.
\end{equation}
In this setting, $K$ is the same for each $p$, i.e. there is the same number of submatrices $\cc{A}_{i,j}^{(p)}$'s in the decomposition of each $\cc{A}^{(p)}$. This constraint can easily be relaxed but we keep it for the sake of simplicity. Moreover, we suppose that
\begin{equation}
\label{eq::submatspliting}
    \cc{A}^{(p)}_{i,j} = \cc{U}_{i}^{(p)}\cc{V}_j^{(p)}, \hspace{0.5cm}\cc{U}_{i}^{(p)}\in \bb{C}^{M_i^{(p)}\times R_p}, \hspace{0.1cm}\cc{V}_{j}^{(p)}\in \bb{C}^{R_p\times N_j^{(p)}},
\end{equation}
where $R_p\in \bb{N}^*$ does not depend on $i$ or $j$. In other words, each submatrix $\cc{A}_{i,j}^{(p)}$ can be factorised into a fixed-size product with a left (resp. right) term only depending on $i$ (resp. $j$), see Fig. \ref{fig::decomp_mat_skp} for graphical representation.
\begin{figure}
    \centering
    \includegraphics[width=0.49\linewidth]{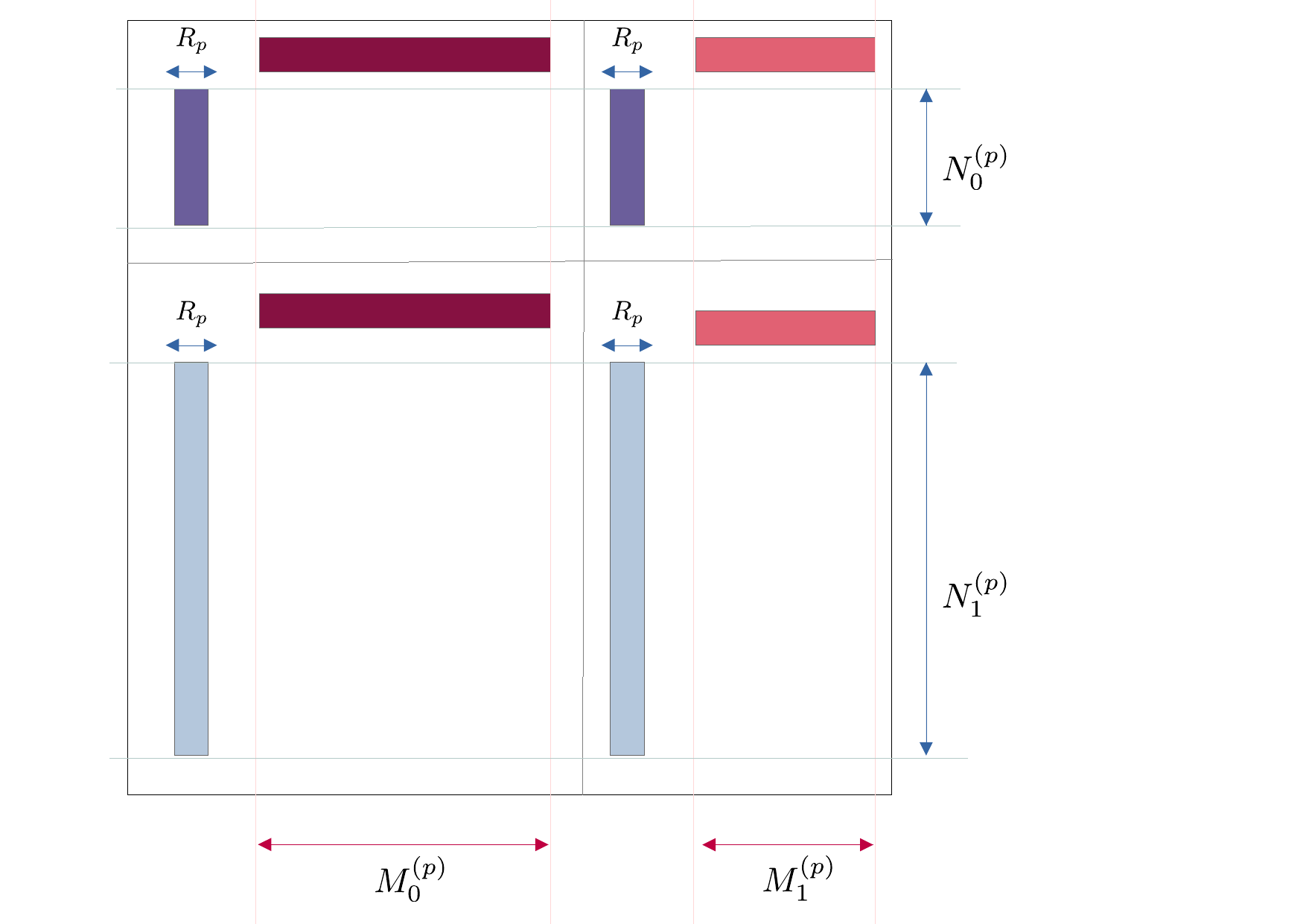}
    \includegraphics[width=0.5\linewidth]{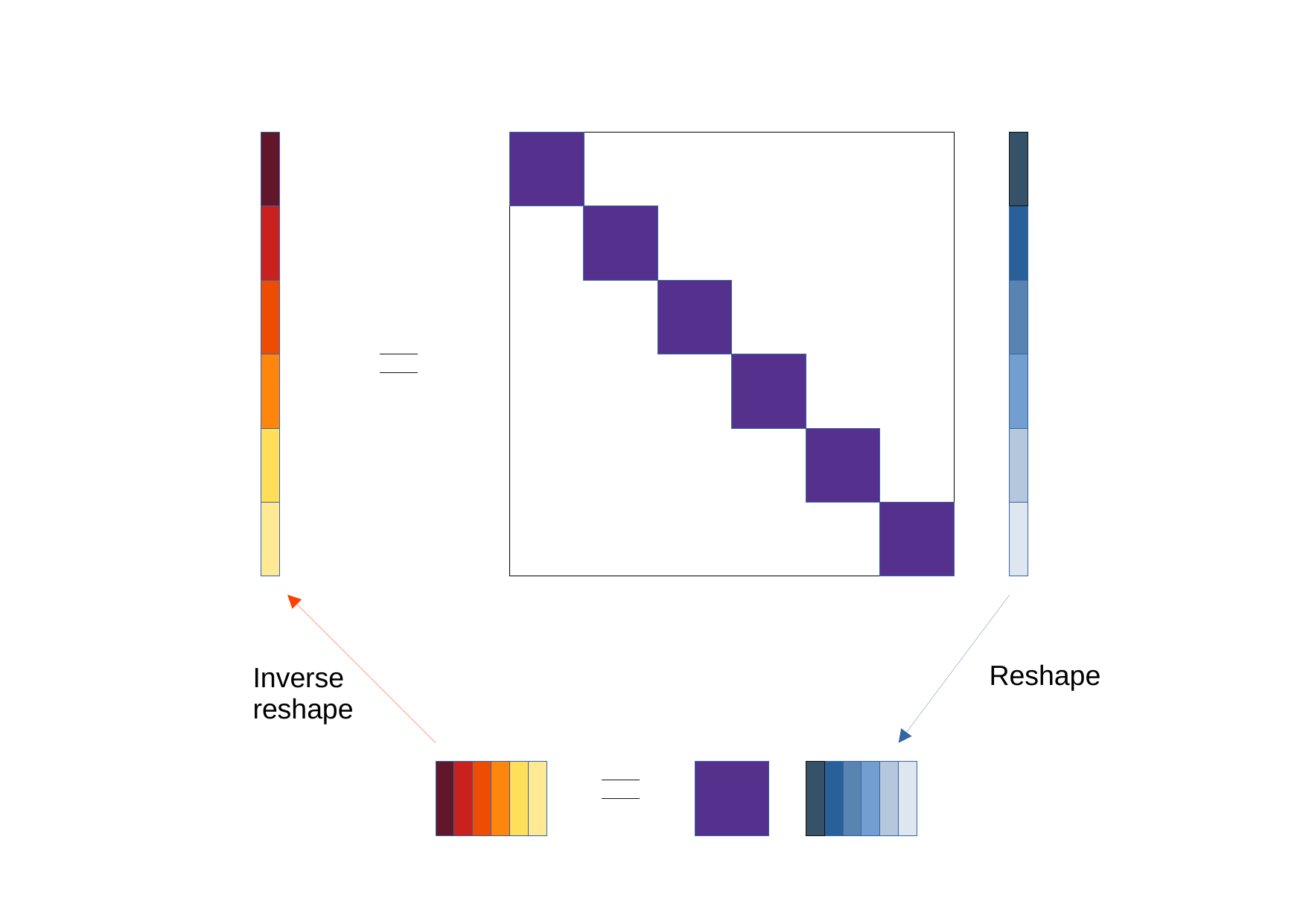}
    \caption{\textbf{(Left)} Schematic representation of a matrix $\cc{A}^{(p)}$ with $K=2$. Matrices with the same color are the same in this representation. \textbf{(Right)} Matrix and vector reshaping for matrix-matrix product.\label{fig::decomp_mat_skp}}
\end{figure}
Notice that the matrix $\cc{A}^{(p)}$ is itself a rank-$R_p$ matrix in this setting.

\subsection{Fast Kronecker product}
\label{ss::fkp}
In this section, we present the so called \textit{shuffle algorithm} \cite{shufflealgo}. By means of $\cc{A}^{(i)}\otimes \cc{A}^{(j)} = (\cc{A}^{(i)}\otimes \cc{I}_{N^{(j)}})(\cc{I}_{M^{(i)}}\otimes A^{(j)})$, with $\cc{I}_N$ the identity matrix of size $N$, the matrix in Eq. \eqref{eq::aasproductkron} can be expressed as
\begin{equation}
\label{eq::kron_as_prod}
    \begin{aligned}
        \cc{A} &= \prod_{p=0}^{d-1}\underbrace{\left(\cc{I}_{N^{(p)}_{0\rightarrow p-1}}\otimes \cc{A}^{(p)}\otimes \cc{I}_{M^{(p)}/M^{(p)}_{0\rightarrow p}}\right)}_{=: \tilde{\cc{A}}^{(p)}},
    \end{aligned}
\end{equation}
where $N^{(p)}_{0\rightarrow i} := \begin{cases}N_0^{(p)}\times ...\times N_i^{(p)}\text{ if }i>0\\1\text{ otherwise}\end{cases}$ (same for $M^{(p)}$). The product in Eq. \eqref{eq::kron_as_prod} is \textit{ordered} to preserve the dimension coherence, that is the $p^{th}$ term in the product is on the left side of the $(p+1)^{th}$.

A product by $\cc{A}$ exploiting Eq. \eqref{eq::kron_as_prod} 
thus leads to a sequence of products of matrices $\tilde{\cc{A}}^{(p)}$ for which there exists a fast application method. Indeed, there exist two permutation matrices $\cc{Q}^{left}_p$ and $\cc{Q}_p^{right}$ \cite{shufflealgo,doi:10.1137/140980326} such that
\begin{equation}
\label{eq::withpermut}
    \tilde{\cc{A}}^{(p)} = \cc{Q}^{left}_p \underbrace{\begin{bmatrix}
    \cc{A}^{(p)}&\\
    &\ddots&\\
    &&\cc{A}^{(p)}\end{bmatrix}}_{diag\left(\cc{A}^{(p)}\right)} \cc{Q}^{right}_p.
\end{equation}
The sequential algorithm directly follows, as written in Alg. \ref{alg:kronprod}.

\begin{algorithm}
\caption{Fast Kronecker matrix-vector product $\cc{A}\bo{q}$\label{alg:kronprod}}
\begin{algorithmic}
\State $\bm{\phi} = \bo{q}$
\For{$p=0,...,d-1$ in ascending order}
  \State $\bm{\phi}_{r} = \cc{Q}^{right}_{d-1-p}\bm{\phi}$
  \State $\bm{\psi} = diag\left(\cc{A}^{(d-1-p)}\right)\bm{\phi}_r$
  \State $\bm{\phi} = \cc{Q}_p^{left}\bm{\psi}$
\EndFor
\State $\bo{z}=\bm{\phi}$
\end{algorithmic}
\end{algorithm}
The permutation matrices can obviously be applied with linear complexity. Since $\cc{A}^{(p)}\in \bb{C}^{N^{(p)}\times M^{(p)}}$, and because $\cc{A}^{(p)}$ is applied to $\prod_{k=0}^{p-1}M^{(p)}\prod_{k=d-p}^{d-1}N^{(p)}$ vectors of size $M^{(p)}$ in a product by $diag\left(\cc{A}^{(p)}\right)$, this last costs 
\begin{equation}
\label{eq::complexitykroncomplete}
    \cc{O}\left(\sum_{p=0}^{d-1}\left(\prod_{k=0}^{p-1}M^{(p)}\right)\left(\prod_{k=d-p}^{d-1}N^{(p)} \right) M^{(p)}\right)
\end{equation}
where $N^{(p)}=0$ if $k\geq d$. This formula may be easier to read in simple cases. For instance, if $M^{(p)}=N^{(p)}=L$ for any $p$, then Eq. \eqref{eq::complexitykroncomplete} becomes
\begin{equation}
    \cc{O}\left(L^{d+1}\right).
\end{equation}
This complexity is better than the quadratic $\cc{O}\left(L^{2d}\right)$ of the naive Kronecker product application consisting in explicitly constructing the matrix $\cc{A}$ and in performing a direct matrix-vector product using $\cc{A}$.

\subsection{BLAS-3 optimisation and reshape}
Alg. \ref{alg:kronprod} can be numerically accelerated through the use of dense matrix-matrix operations (so called \textit{BLAS-3} operations) instead of matrix-vector ones, resulting into substantial gain in terms of arithmetic intensity (i.e. into better application timings). Indeed, a matrix-vector product involving a block-diagonal matrix $diag(\cc{A}^{(p)})\in\bb{C}[\Xi,\Xi]$, $\Xi=\Xi_0\times ... \times \Xi_{d-1}$, with \textit{the same block all along the diagonal} and a vector $\bo{q}\in \bb{C}[\Xi]$ can be written using the notations of Sect. \ref{sss::nkpa} under the following form
\begin{equation}
    diag(\cc{A}^{(p)})\cdot\bo{q} = \cc{I}^{\bullet}_p \cdot \cc{A}^{(p)} \cdot \cc{I}^{\circ}_p\cdot\bo{q},
\end{equation}
as depicted on Fig. \ref{fig::decomp_mat_skp}. This allows to write Eq. \eqref{eq::kron_as_prod} in a simple way
\begin{equation}
    \cc{A}\cdot\bo{q} = \left(\prod_{p=0}^{d-1} \cc{I}^{\bullet}_p\cdot \cc{A}^{(p)} \cdot \cc{I}^{\circ}_p\right) \cdot \bo{q},
\end{equation}
in which the permutations are hidden in the consecutive application of reshapes based on different coordinates. This provides an easier to manipulate expression of Alg. \ref{alg:kronprod} given in Alg. \ref{alg:reshapedkronprod}.
\begin{algorithm}
\caption{Reshaped Fast Kronecker matrix-vector product $\cc{A}\bo{q}$\label{alg:reshapedkronprod}}
\begin{algorithmic}
\State $\bm{\phi} = \bo{q}$
\For{$p=0..d-1$ in ascending order}
  \State $\bm{\phi}_{r} = \cc{I}^{\circ}_{d-1-p} \cdot\bm{\phi}$
  \State $\bm{\psi} = \cc{A}^{(d-1-p)}\cdot\bm{\phi}_r$
  \State $\bm{\phi} = \cc{I}^{\bullet}_{d-1-p}\cdot \bm{\psi}$
\EndFor
\State $\bo{z}=\bm{\phi}$
\end{algorithmic}
\end{algorithm}
This trick on vector stacking is a classical optimisation when dealing with precomputed matrices.

\subsection{Parallel algorithm}
\label{ss::paralgokron}
The algorithm provided in Sect. \ref{ss::fkp} can be accelerated using parallel architectures thanks to a well chosen row and column index decomposition. General algorithm for parallel Kronecker dense matrix-vector products have already been proposed \cite{tadonkiphilippe1}. Our method actually exploits close ideas, but takes into account the particular form of the matrices presented in Sect. \ref{ss::settingkron}. In more details, considering $\bo{q}\in \mathbb{C}\big[\Xi\big]$, $\bo{q}$ can be decomposed in subvectors $\bo{q}_\bo{j}$ defined as restrictions of $\bo{q}$ to product subgrids $\Xi_{\bo{j}}$ of $\Xi = \Xi_0\times ... \times \Xi_{d-1}$:
\begin{equation}
\label{eq::selectcols}
\begin{aligned}
    \Xi_p =: \cup_{k=0}^{d-1}\Xi_{p,k},\hspace{0.1cm} \bo{j}\in [\![0,K-1]\!]^d,\hspace{0.1cm}\Xi_{\bo{j}}:=\Xi_{0,j_0}\times ... \times \Xi_{d-1,j_{d-1}},\hspace{0.2cm} \bo{q}_\bo{j} := \bo{q}_{|\Xi_{\bo{j}}},\end{aligned}
\end{equation}
in which we partitioned each $\Xi_p$ into $K$ different $\Xi_{p,k}$'s. This restriction can be represented by a linear operator $\chi_\bo{j}\in\bb{R}[\Xi_{\bo{j}},\Xi]$ such that $\bo{q}_\bo{j} = \chi_\bo{j}\bo{q}$. Moreover, $\bo{q}$ is a combination of $\bo{q}_\bo{j}$'s extended by zeros (the extension operator is denoted by $\chi_\bo{j}^*$):
\begin{equation}
    \sum_{\bo{j}\in [\![0,K-1]\!]^d}\chi_\bo{j}^*\bo{q}_\bo{j} = \bo{q}.
\end{equation}
Actually, following Eq. \eqref{eq::formegeneraledecompblockron}, $\cc{A}_{\bo{i},\bo{j}} := \bigotimes_{p=0}^2\cc{A}^{(p)}_{i_p,j_p} = \chi_\bo{i}\cc{A}\chi_\bo{j}^*$ corresponds to the restriction of $\cc{A}$ to the left subgrid $\bo{i}$ and the right subgrid $\bo{j}$. Then, in these notations, the application of $\cc{A}$ to $\bo{q}$ can be seen as a sequence of Kronecker products defined on subgrids and applied to $\bo{q}_\bo{j}$'s:
\begin{equation}
    \cc{A}\cdot\bo{q} = \sum_{\bo{j}\in [\![0,K-1]\!]^d}\left(\bigotimes_{p=0}^{d-1}\begin{bmatrix}
    \cc{A}_{0,j_p}^{(p)}\\\vdots\\\cc{A}_{K-1,j_{p}}^{(p)}
    \end{bmatrix}\right)\bo{q}_\bo{j} .
\end{equation}
In other words, as already done in the literature \cite{tadonkiphilippe1}, we decompose the set of columns of $\cc{A}$ into slices with a permutation that stacks together the indices resulting from a small Kronecker product. This allows to perform in parallel a set of smaller Kronecker matrix-vector products, and to recover the final result through a simple reduction\footnote{Here, the term is used in the sense of the MPI standard, referring to a parallel sum over all involved processes.}.
In our case, things are even simpler since, according to Eq. \eqref{eq::submatspliting} and Eq. \eqref{eq::withpermut},
\begin{equation}
\label{eq::splitcolqj}
\begin{aligned}
    \cc{A}\cdot\bo{q} &=  \sum_{\bo{j}\in [\![0,K-1]\!]^d}\left(\bigotimes_{p=0}^{d-1}\begin{bmatrix}
    \cc{U}_{0}^{(p)}\\\vdots\\\cc{U}_{K-1}^{(p)}
    \end{bmatrix}\cc{V}^{(p)}_{j_p}\right)\bo{q}_{\bo{j}}
    = \sum_{\bo{j}}\prod_{p=0}^{d-1}\cc{I}^{\bullet}_p \cdot \begin{bmatrix}
    \cc{U}_{0}^{(p)}\\\vdots\\\cc{U}_{K-1}^{(p)}
    \end{bmatrix}\cc{V}^{(p)}_{j_p}\cdot \cc{I}^{\circ}_p \cdot \bo{q}_\bo{j}.
\end{aligned}
\end{equation}

Let us consider a decomposition of the overall computation on $K^d$ processes, each of them identified by a multi-index $\bo{j}\in [\![0,K-1]\!]^d$. There are $K^d$ different $\bo{q}_\bo{j}$'s and we can assign $\bo{q}_\bo{j}$ to process $\bo{j}$. The entire result $\bo{z}=\cc{A}\bo{q}$ has not to be stored on each process, and we can assign $\bo{z}_\bo{i}$ to process $\bo{i}$ where $\bo{z}_\bo{i} := \chi_\bo{i}\bo{z}$.
This turns Eq. \eqref{eq::splitcolqj} into
\begin{equation}
\label{eq::zprodsum}
\begin{aligned}
    \bo{z}_\bo{i} 
    &=\sum_{\bo{j}\in [\![0,K-1]\!]^d}\prod_{p=0}^{d-1}\left(\cc{I}^{\bullet}_p \cdot 
    \cc{U}_{i_p}^{(p)}\right)\left(\cc{V}^{(p)}_{j_p}\cdot \cc{I}^{\circ}_p \cdot \bo{q}_\bo{j}\right)\\
\end{aligned}.
\end{equation}
Since $\sum_{\bo{j}\in [\![0,K-1]\!]^d} = \sum_{j_0\in [\![0,K-1]\!]}...\sum_{j_{d-1}\in [\![0,K-1]\!]}$, one may pass the different one-dimensional sums within the product in Eq. \eqref{eq::zprodsum}. Thus, the needed reductions can be performed over $K$ processes each time (instead of $K^d$), and are performed in parallel by $K^{d-1}$ groups of processes. These communications only involve vectors of size depending on $R_p$.
\begin{algorithm}
\caption{Fast Split Parallel Kronecker matrix-vector product $\cc{A}\bo{q}$ (SPKMV($\cc{A},\bo{q}$))\label{alg:parkronprod}}
\begin{algorithmic}
\State $\bo{j} = $process index
\State $\bo{J}_p = \big\{\bo{i}\hspace{0.1cm}|\hspace{0.1cm}i_k=j_k\hspace{0.1cm}\forall\hspace{0.05cm}k\neq p\big\}$
\State $\bm{\phi}_\bo{j} = \bo{q}_\bo{j}$
\For{$k=0,...,d-1$ in ascending order}
  \State $\bm{\phi}_{r,\bo{j}} = \cc{V}^{(d-1-p)}_{j_{d-1-p}}\cc{I}^{\circ}_{d-1-p} \cdot\bm{\phi}_\bo{j}$
  \State $\bm{\psi}_\bo{j} = \sum_{\bo{i}\in \bo{J}_p}\bm{\phi}_{r,\bo{j}}$ using parallel reduction
  \State $\bm{\phi}_\bo{j} = \cc{I}^{\bullet}_{d-1-p}\cdot \cc{U}^{(d-1-p)}_{i_{d-1-p}}\bm{\psi}_\bo{j}$
\EndFor
\State $\bo{z}=\bm{\phi}$
\end{algorithmic}
\end{algorithm}
We provide in Alg. \ref{alg:parkronprod} the resulting parallel algorithm. In its definition, $\bo{J}_p$ can be considered as a \textit{communicator} involving the multi-index of processes interacting with local process during the $p^{th}$ step of the parallel algorithm. Thus, there are $d$ parallel reductions on vectors of size $R_p$ among $K$ processes in this algorithm, resulting in parallel complexity
\begin{equation}
    \cc{O}\left(log(K)\left(\alpha+\beta \mathop{max}_{p=0}^{d-1}\{R_p\}\right)\right)
\end{equation}
where $\alpha$ is the network latency and $\beta$ refers to the inverse bandwidth \cite{calu}.

\section{Kroneckerised Particle Mesh Ewald}
\label{s::kpmepar}
In Sect. \ref{s::sumofkronprod}, we demonstrated that the convolution in Eq. \eqref{eq::sumhexpr} can benefit from SKP expressions. Hence, in Sect. \ref{ss::parandinterp}, we present a \textit{divide-and-conquer} strategy to formulate Eq. \eqref{eq::sumhexpr} in a suitable way for fast computations. Then, in Sect. \ref{sec::parallel}, we present the parallel method combining this strategy with parallel Kronecker matrix-vector products presented in Sect. \ref{s::fkpeog}. Finally, in Sect. \ref{ss::complecityanalysispskppme}, we compute the actual complexity of the proposed method.

\subsection{Partitioning and interpolation}
\label{ss::parandinterp}
The main idea detailed in this section is to exploit a domain decomposition into a grid form of the particle distribution, with cells (i.e. elements of the decomposition) of constant size. Let $c$ be such a cell. According to Sect. \ref{ss::intrpcomplexexpo}, to ensure the convergence of interpolation process on $c$, denoting $r_c$ its radius, we assume that $r_c<M^{-1}$, meaning that we have a total of $\cc{O}\left(\lceil Mr_B\rceil\right)$ cells in each direction, resulting in a total of $K^3=\cc{O}\left(\lceil Mr_B\rceil^3\right)$ cells. Since we are dealing with a three-dimensional cartesian cell grid, each cell can be localised in this grid thanks to a multi-index in $\bo{i}\in [\![0,K-1]\!]^3$. This results in a matrix decomposition of $\cc{H}_M$ (see Eq. \eqref{eq::sumhexpr}) of the form
\begin{equation}
    \begin{bmatrix}
    \cc{A}_{c_\bo{0},c_\bo{0}}&\hdots &\cc{A}_{c_\bo{0},c_{(K-1)\bo{1}}}\\
    \vdots & \ddots & \vdots\\
    \cc{A}_{c_{(K-1)\bo{1}},c_\bo{0}}&\hdots &\cc{A}_{c_{(K-1)\bo{1}},c_{(K-1)\bo{1}}}
    \end{bmatrix},\hspace{0.1cm}\cc{A}_{c_\bo{i},c_\bo{j}}\left(\bo{x},\bo{y}\right) := \sum_{0\neq \bo{m}\in \bb{Z}^3}\alpha(|\bo{m}|)e^{2i\pi \langle \bo{x}-\bo{y},\bo{m}\rangle},
\end{equation}
for any $\bo{x}\in c_\bo{i}$ and $\bo{y}\in c_\bo{j}$, where $\cc{A}_{c_\bo{i},c_\bo{j}}$ is the matrix of interactions between cells $c_\bo{i}$ and $c_\bo{j}$, with $K$ cells per dimension (i.e. a grid of $K^3$ cells). 

Moreover, fixing an interpolation order $L$ supposed to be the same in each direction for each cell, one can provide a product interpolation grid on each of them. In this setting, the interpolation grid $\bb{G}_\bo{i}$ on the $\bo{i}^{th}$ cell, writes
\begin{equation}
    \bb{G}_\bo{i} = \bb{G}_{i_0}\times \bb{G}_{i_1}\times \bb{G}_{i_2}, \hspace{0.5cm} c_{\bo{i}}= Conv(\bb{G}), 
\end{equation}
where $Conv(\bb{G})$ denotes the convex hull of $\bb{G}$, that is a cuboid in our case. Thus, as a consequence of Eq. \eqref{eq::formfinale}, denoting $\cc{A}^{(j)}_{\bo{k},\bo{l}}(\lambda) := \cc{V}[\lambda]_{\bb{G}_{k_j}}^*\cc{V}[\lambda]_{\bb{G}_{l_j}}$ (see Eq. \eqref{eq::VtV}) and $\cc{S} = \begin{bmatrix}
\cc{S}_{c_\bo{0}} & &\\
& \ddots &\\
&& \cc{S}_{c_{(K-1)\bo{1}}}
\end{bmatrix}$ the diagonal matrix of interpolation polynomials over each cell (i.e. as in Eq. \eqref{eq::sxsyfromB} but on each cell instead of th entire $B$), $\cc{H}_M$ can be approximated (see Eq. \eqref{eq::formfinale}) by
\begin{equation}
\label{eq::jesaisplusquoimettre}
\begin{aligned}
\cc{S}^*\left(
\sum_{\lambda\in \Lambda}\bigotimes_{j=0}^2
\underbrace{\begin{bmatrix}\cc{A}_{\bo{0},\bo{0}}^{(j)}&\hdots &\cc{A}_{\bo{0},(K-1)\bf{1}}^{(j)}\\\vdots&\ddots & \vdots\\\cc{A}_{(K-1)\bf{1},\bo{0}}^{(j)}&\hdots &\cc{A}_{(K-1)\bf{1},(K-1)\bf{1}}^{(j)}\end{bmatrix}(\lambda)}_{\cc{A}^{(j)}(\lambda)}\right)\cc{S}
    -c_\Lambda\bo{1}\bo{1}^T,
    \end{aligned}
\end{equation}
which compactly rewrites
\begin{equation}
    \left(\cc{S}^*\left(\sum_\lambda\underbrace{\bigotimes_{j=0}^2\cc{A}^{(j)}(\lambda)}_{\cc{A}(\lambda)} \right)\cc{S}\right)-c_\Lambda\bo{1}\bo{1}^T,
\end{equation}
which itself has the same form than in Thm. \ref{thm::kronpme}. Here, each $\cc{A}(\lambda)$ verifies the assumptions of Sect. \ref{ss::settingkron}, meaning that we can perform fast products using (parallel) split Kronecker matrix-vector products (SPKMV) as in Sect. \ref{ss::paralgokron}. We have an amount of SPKMV equal to the number $|\Lambda|$ of terms in the SKP. 

The handling of the second term of Eq. \eqref{eq::jesaisplusquoimettre} is easier since we have
\begin{equation}
\label{eq::termbonsum}
    c_\Lambda\bo{1}\bo{1}^T\bo{q} = c_\Lambda\left(\sum_{j}q_j\right)\bo{1}=:c(\Lambda,\bo{q})\bo{1},
\end{equation}
$c(\Lambda,\bo{q})\bo{1}\in \bb{R}[\bb{X}]$ referring to the \textit{correction term of} $\bo{q}$. This correction thus takes the form of a constant added to each term after the sequence of SPKMV.

\subsection{Parallel algorithm}
\label{sec::parallel}
It is now possible to provide the global parallel algorithm allowing to compute Eq. \eqref{eq::sumhexpr} based on a SKP approximation of the reciprocal kernel matrix, as proposed in Alg. \ref{alg:kpme}. For the sake of simplicity, we assume that
\begin{itemize}
    \item The source point cloud $\bb{Y}$ is the same than the target one $\bb{X}$,
    \item If a particle lies at the boundaries between multiple cells, it is arbitrary assigned to only one of them,
    \item All the interpolation orders in each interpolation grid in each direction are the same (this reduces the total number of parameters),
    \item The decomposition is such that each cell has a radius less than $M^{-1}$,
    \item Each MPI process stores the input and output corresponding to its multi-index.
\end{itemize}

\begin{algorithm}
\caption{Kroneckerised Particle Mesh Ewald (KPME)\label{alg:kpme}}
\begin{algorithmic}
\State $\bo{j} = $process multi-index
\State $\bo{W}$ the set of all process multi-indices
\State Decompose and distribute the point cloud according to Eq. \eqref{eq::selectcols} among $\bo{W}$
\State $\nu_\bo{j} = $ the sum of $\bo{q}_\bo{j}$'s elements
\State $c(\Lambda,\bo{q}) = \sum_{\bo{i}\in \bo{W}}\nu_\bo{i}$ through reduction over $\bo{W}$
\State $\bm{\phi}_\bo{j} = \cc{S}\cdot\bo{q}_\bo{j}$ apply the interpolation matrix
\For{$\lambda\in \Lambda$}
    \State $\bm{\phi}_\bo{j} \leftarrow \bm{\phi}_\bo{j} + SPKMV(\cc{A}(\lambda),\bo{\psi})$ in parallel using Alg. \ref{alg:parkronprod}
\EndFor
\State $\bm{\psi}_\bo{j} = \cc{S}^*\cdot\bm{\phi}_\bo{j}$ using the transpose of the interpolation matrix
\State $\bo{z}_\bo{j} = \bm{\psi}_\bo{j}  - c(\Lambda,\bo{q})\bo{1}$ to apply the correction term and obtain the final result.
\end{algorithmic}
\end{algorithm}
One can summarise Alg. \ref{alg:kpme} as in the following sequence of parallel steps
\begin{itemize}
    \item The algorithm starts by interpolating local particles in each process,
    \item For each $\lambda$, i.e. for each term in the SKP decomposition of $\cc{H}_M$, we compute the SPKMV between the SKP term $\cc{A}(\lambda)$, distributed among processes,
    \item Once the sum of SKP term contributions is obtained, we interpolate back,
    \item The computation ends by correcting the result because of the constant term in SKP decomposition (this term being obtained by simple reduction among all processes according to Eq. \eqref{eq::termbonsum}).
\end{itemize}

A graphical interpretation of Alg. \ref{alg:kpme} is provided in Fig. \ref{fig::kpmealgo}. In this algorithm, the point cloud decomposition is the same than a particle sorting inside a cartesian cell grid. Hence, it can be performed in a number of operation proportional to the size $N$ of the point cloud. A \textit{all2allv}-type of MPI communication is needed to distribute the particles over the MPI processes. We will not consider this particle assignment in the cost estimates, since it has to be performed only once before the simulation and can be handled through external routines that could call KPME in applications (for instance \cite{lagardere2018tinker}).

\begin{figure}
    \centering
    \includegraphics[width=\linewidth]{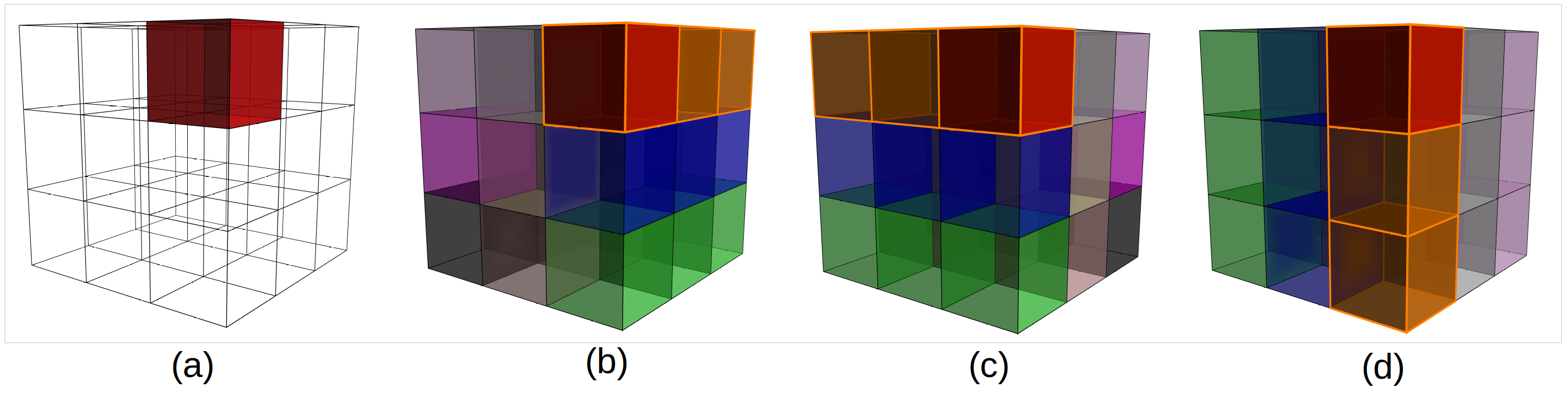}
    \caption{\label{fig::kpmealgo}Schematic view of the parallel algorithm at a given quadrature node $\lambda$ on a $3\times 3\times 3$ MPI grid. Elements of an MPI communicator are depicted in the same color (except that the red current process $\bo{l}$ which is in the orange communicator). \textbf{(a)} \cgrey{Each process independently applies the Fourier transform $\cc{V}[\lambda]_{c_{l_2}}$ to its data reshape by $\cc{I}^{\bullet}_2$ on the $Z$-axis of the $\lambda$'s term in SKP decomposition.} \textbf{(b)} The information is reduced through the $Z$-axis communicators (in the Fourier domain). \cgrey{Then the Fourier information is locally transformed back into the real one along the $Z$-axis using $\cc{V}[\lambda]_{c_{l_2}}^*$. The information (after a reshape) along the $Y$-axis is transformed into the Fourier domain using $\cc{V}[\lambda]_{c_{l_1}}$.} \textbf{(c)} The information is reduced through the $Y$-axis communicators in the Fourier domain. \cgrey{Then the Fourier information is locally transformed back into the real one along the $X$-axis using $\cc{V}[\lambda]_{c_{l_1}}^*$. The information (after a reshape) along the $X$-axis is transformed into the Fourier domain using $\cc{V}[\lambda]_{c_{l_0}}$.} \textbf{(d)} The information is reduced through the $X$-axis communicators in the Fourier domain. \cgrey{Then the Fourier information is locally transformed back into the real one along the $Z$-axis using $\cc{V}[\lambda]_{c_{l_0}}^*$.}}
\end{figure}

\subsection{Complexity analysis}
\label{ss::complecityanalysispskppme}
The analysis of Alg. \ref{alg:kpme} is quite obvious thanks to the complexity estimate of SPKMV. Indeed, the parallel complexity of Alg. \ref{alg:kpme} is simply multiplied by the number $|\Lambda|$ of terms in the SKP decomposition plus one (for the correction term assembly). This complexity is thus estimated on a grid of $K^3$ MPI processes by
\begin{equation}
\label{eq::estimatepar}
    \cc{O}\left(|\Lambda|log(K)\left(\alpha+\beta M\right)\right).
\end{equation}
The $M$ in Eq. \ref{eq::estimatepar} comes from our chosen communications that only act on \textit{slices} of the retained set of Fourier modes $\bb{M}_+$ (see Sect. \ref{ss::overallcomp}). It is worth noticing that an alternative algorithm could perform every local SKP term contribution first and only perform three last reductions grouping all SKP terms to recover the final result (omitting the correction term). This would drastically reduce the number of communications but also would increase proportionally the length of the messages (i.e. the factor in front of $\beta$).

In the case of the kernel $\cc{H}$ in Eq. \eqref{eq::hexpr}, combining Eq. \eqref{eq::estimatepar} with the lower bound in Eq. \ref{eq::estimatequad}, the complexity estimation becomes
\begin{equation}
    \Omega\left(\left(\frac{log\left(\frac{\epsilon}{16}\right)}{\pi}\right)^2log(K)\left(\alpha+\beta M\right)\right).
\end{equation}

Now, locally, denoting by $\gamma$ the cost of an operation, each process performs
\begin{equation}
    \cc{O}\left(\gamma\left(\underbrace{L^3P}_{\text{interpolations}} + \underbrace{ML^3}_{\substack{\text{product by }\\\cc{V}[\lambda]_{c_{l_j}}\text{'s}}}\right)\right)
\end{equation}
operations since the one-dimensional interpolation order of local three-dimensional interpolation grids is supposed to be the same and equal to $L$. The term $P$ here refers to the maximal number of particles in an MPI process, estimated by $\bb{Y}/K^3$ in practice (indeed, for molecular dynamics applications, atoms are quasi-uniformly distributed in the computational box). Actually, one can reduce the interpolation complexity to $\cc{O}\left(\gamma (LP+L^3)\right)$ thanks to a careful ordering of operations. The correction term is applied in linear time w.r.t. the local number of particles.

Thus, the final complexity estimate writes
\begin{equation}
\label{eq::sequentialcomplexitytotalt}
    \cc{O}\left(|\Lambda| log(K)(\alpha+\beta M) + \gamma L(P+|\Lambda| ML^2)\right).
\end{equation}
The comparison between this estimate and the PME complexity is difficult since we need further assumptions (on links between number of particles and choice of $M$), and because it requires to come back to a sequential case. However, we can assume that the number of particle in a process is proportional to the number of considered Fourier modes for the convergence to be reached. Thus, assuming that each task of MPI processes is sequentially executed and that $|\Lambda|$ is a constant (that may not be the case for small targeted precision), we obtain a coarse estimate of $\cc{O}\left(M^4\right)$ for sequential execution of the proposed algorithm, which is not asymptotically competitive with the linearithmic complexity $\cc{O}\left(M^3log(M)\right)$ of PME. Notice that we can further accelerate our method by exploiting Non-Uniform Fast Fourier Transforms (NUFFT) \cite{LEE20051}, leading to complexity estimates close to the linearithmic one. The parallel comparison requires measurements of optimised code execution timings, which is out of the scope of this article.

\section{Numerical results}
\label{s::numericals}
This section is dedicated to numerical tests on proposed KPME. The experiments in this work were carried out on MAGI, the experimental platform of University of Sorbonne Paris North (USPN) dedicated to research. This platform offers researchers at the institution High-Performance Computing (HPC), cloud and storage services.\footnote{https://github.com/Nyk0/magi-wiki/tree/main}.

Our code is written in C++ and exploits MPI routines (using openmpi-5.0.3). Notice that we compiled our code with option -march=x86-64. Better timings can probably be achieved by considering architecture specific optimisations. We also used -O3 -ffast-math -mfma -fno-trapping-math flags.

\subsection{Convergence}
First, we check the practical convergence of our approximation scheme. In Fig. \ref{fig::convergencetotal}, we report the error plots obtained on a single cell using interpolation only and with all approximations (including quadrature based on Yarvin-Rokhlin rules \cite{yarvinrokhlin} and compression of $\bm{\alpha}_M$). 
\begin{figure}
    \centering
    \includegraphics[width=0.45\linewidth]{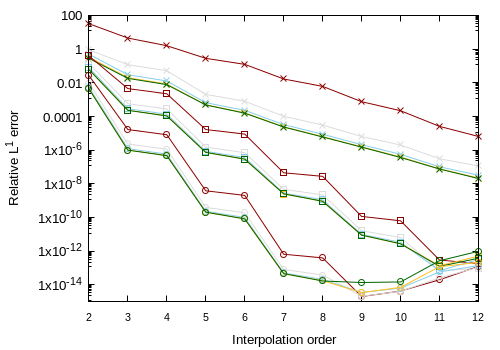}
    \includegraphics[width=0.45\linewidth]{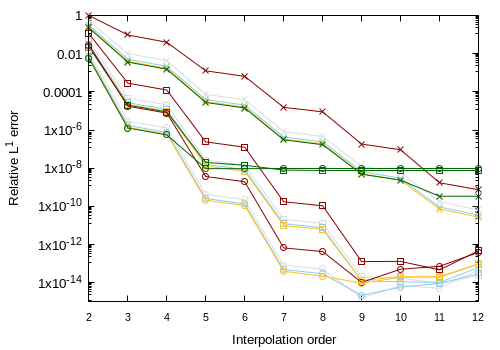}
    \caption{Convergence plot for various $M$ and cell size ratio $\nu$ (see proof of Thm. \ref{thm::convratemultcomp}) using the following code. Red: $M=2$; Grey: $M=4$; Blue: $M=8$; Yellow: $M=16$; Green: $M=32$; Crosses: $\nu=2^{-1}$; Squares: $\nu=4^{-1}$; Circles: $\nu=8^{-1}$ \textbf{(Left)} Relative error on a cell w.r.t. one-dimensional interpolation order for various values of $M$ \textbf{(Right)} Relative error on final algorithm (interpolation + quadrature using Yarvin-Rokhlin rule for $M=12$ + split compressed real Fourier matrices)\label{fig::convergencetotal}}
\end{figure}
A direct observation shows that the overall error fits with the interpolation one (provided that the condition of Thm. \ref{thm::trefethen} is fulfilled). Moreover, this interpolation error practically behaves as estimated in Thm. \ref{thm::convratemultcomp}, that is as $\cc{O}\left(\nu^L\right)$ where $\nu\approx Mr_c$. According to this figure, a precision can be approximately attained for a given cell with fixed interpolation parameter through dividing this cell into two smaller ones in a given direction as well as the interpolation order in this direction. It can be seen on the sub-figure combining all approximation that the convergence is preserved for $M$'s in the range covered by the chosen quadrature rule (i.e. $\leq 12$ in this example). This validates the criterion combination exploited in our method. Numerical instabilities, certainly the result of rounding errors caused by the use of interpolation on equispaced nodes, can also be observed when reaching $\approx 10^{-14}$ relative error (on both sub-figures).

\subsection{Performance}
\subsubsection*{Overall complexity from single cell timings} We first look at sequential performance. These last are measured on a single cell, but extrapolated to sequential multicell case by a simple multiplication by the theoretical number of cells in Fig. \ref{fig:singlecell} top. Indeed, our code was designed for parallel applications and only handles a single cell per MPI process.
\begin{figure}
    \centering
    \includegraphics[width=\linewidth]{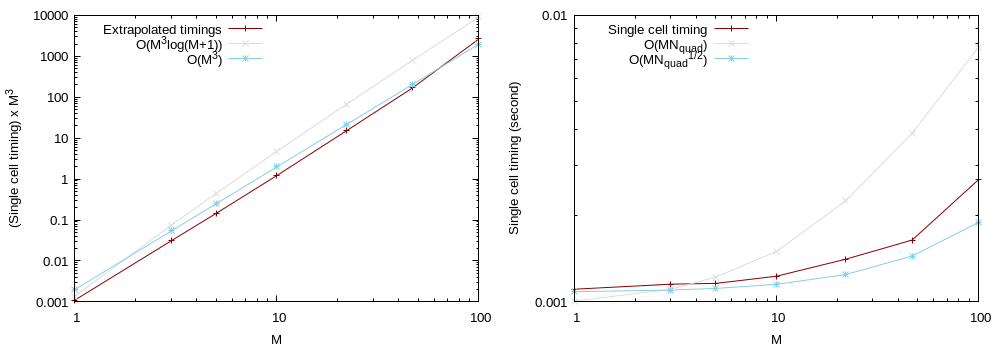}
    \includegraphics[width=\linewidth]{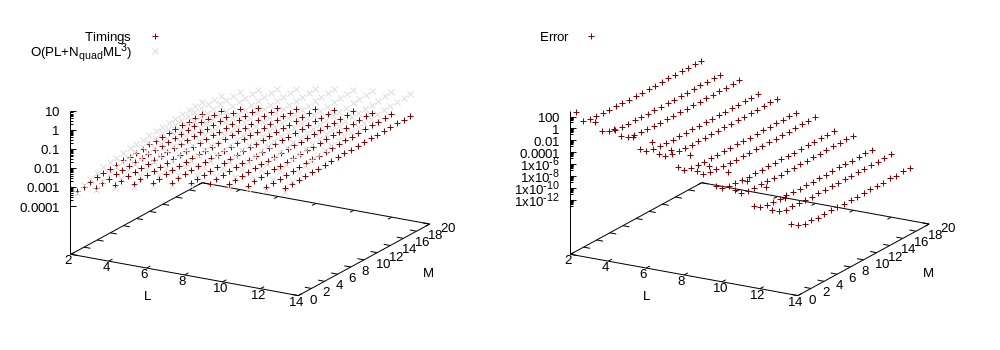}
    \caption{This test ran on AMD EPYC 9655 96-Core Processor. \textbf{[Top]} Timings of single cell execution for $10^3$ particles w.r.t. $M$ \textbf{(left)}. The sequential timings \textbf{(right)} are extrapolated by multiplying single cell timing by the total number of required cells. One-dimensional interpolation order is fixed to $L=5$. \textbf{[Bottom]} \textbf{(Left)} Single cell execution timings (seconds) w.r.t. $M$ and $L$ (red) and theoretical estimates (grey). The cell contains $10^3$ particles. \textbf{(Right)} Relative error for each corresponding $(M,L)$ pair.\label{fig:singlecell}}
\end{figure}
In this example, the sequential complexity seems to better behave than in our estimates of Sect. \ref{ss::complecityanalysispskppme}, following a kind of linearithmic curve. However, these results are extrapolated so a precise measurement on specific code for this sequential case is still needed. Nevertheless, these results also provide hint on local amount of work needed in each MPI process for varying $M$, that numerically almost better behaves w.r.t. $M$ than our estimates in Eq. \ref{eq::sequentialcomplexitytotalt} (for fixed interpolation order $L$ and number of particle $P$ in this cell). Notice that $|\Lambda|$ is not a constant when considering cardinal sine quadrature rule, hence the number of quadrature nodes $N_{quad}$ has to be taken into account in the figure. This validates the fast approach adopted for local computations.

\subsubsection*{Sensibility to parameters and induced error} We depicted on Fig. \ref{fig:singlecell} bottom the timings of code execution for a single cell while varying the number of Fourier modes as well as the interpolation order. Comparing to our theoretical estimate (with $N_{quad}$ the number of quadrature points since we exploit cardinal sine quadrature here) as in Sect. \ref{ss::complecityanalysispskppme}, we note that our estimates are slightly pessimistic, although they reflect the trend in our measured times. The corresponding error, as expected, depend mostly on the interpolation order. There still is a small dependency on the number of Fourier modes $M$, that could be a consequence of the restrictive condition (see Sect. \ref{ss::intrpcomplexexpo}) we used for complex exponential interpolation w.r.t. $M$.

\subsubsection*{Parallel tests} Regarding parallel timings, things are also tricky to evaluate. We have multiple ways of decomposing the computational box into cuboid cells: split in one, two or three directions. In addition, the type of architecture matters: MPI processes can be both on distant nodes (i.e. distributed memory) or on the same node (i.e. shared memory). We provide in Fig. \ref{fig::strongscalefinal} a strong scaling test over up to 30 nodes with various number of MPI processes in each node. In addition, we compare the different domain decomposition possibilities.
\begin{figure}
    \centering
    \includegraphics[width=0.45\linewidth]{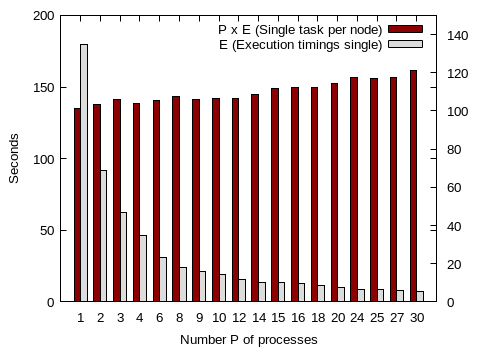}
    \includegraphics[width=0.45\linewidth]{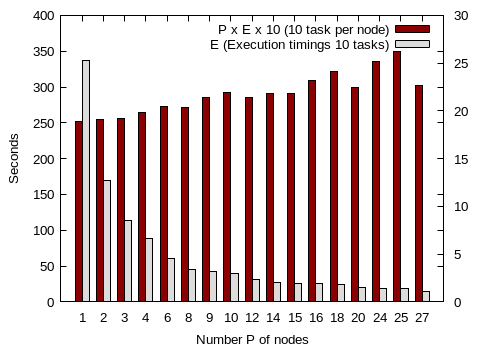}
    \includegraphics[width=\linewidth]{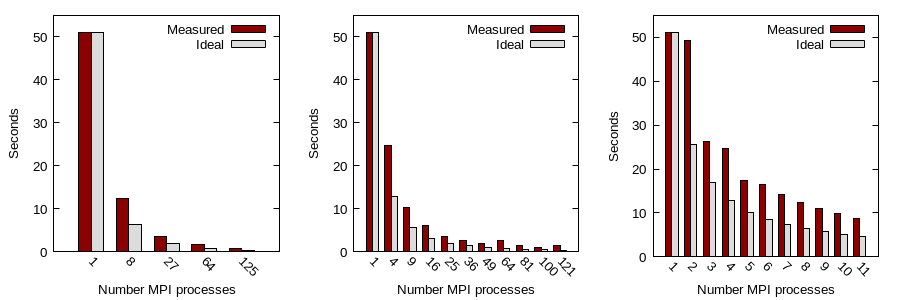}
\caption{\textbf{[Top]} Strong scaling on nodes equipped with 2 Intel(p) Xeon(p) CPU E5-2650 v3 (2.30GHz). Test case corresponds to a cubic box with $10^7$ particles, with 1D interpolation order equal to 8 and $M=10$. \textbf{(Left)} Strong scaling on nodes with a single MPI process per node. In red, number of MPI process multiplied by the measured timings (in seconds) using the left y-axis scale. In grey, the measured timings in seconds, using the right y-axis scale. \textbf{(Right)} Same format but with a total of 10 MPI processes per node. \textbf{[Bottom]} Execution timings on AMD EPYC 9554 64-Core using various types of domain decomposition \textbf{(Left)} three-dimensional decomposition with equal number of MPI processes in each direction \textbf{(Middle)} two-dimensional decomposition with equal number of MPI processes in directions $Y$ and $Z$ \textbf{(Right)} One dimensional decomposition over $Z$.\label{fig::strongscalefinal}}
\end{figure}
It appears that most of the case, our code has a $\approx 2$ factor ratio with ideal scalable timings. This results in convincing efficiency (sequential timing divided by parallel one and number of processes) of $0.83$ on $30$ nodes (and of course greater on smaller number of nodes). The strong scalability property appears less clearly on distributed+shared memory case than in the pure distributed memory one. This can be a consequence of non-optimised code for shared memory parallelism since we designed our implementation for the distributed memory case.

\subsubsection*{Effect of domain decomposition} On the distributed+shared case, one may already observe the effect of 3D decomposition compared to 2D ones, for instance with $25=1\times 5\times 5$ nodes (i.e. 2D decomposition) and $27=3\times 3\times 3$ nodes (i.e. 3D decomposition). Indeed, the case $27$ processes ran faster than the smaller one with $25$ processes, suggesting that the 3D decomposition should be more efficient. By considering greater decomposition in terms of dimensions, one linearly increase the number of messages but decreases geometrically the size of these messages. This seems to matter above a certain MPI world size (here identified as $\approx 100$). However, each decomposition type actually leads to scalable results, as illustrated on the bottom sub-figures. This shows the flexibility of our method with respect to the types of domain decomposition.

\section{Conclusion}
In this paper, we provided a new way of representing the reciprocal part of Ewald summation based on kroneckerisation of diagonal matrix involved in the Fourier domain and on particle interpolation on grids. Hence, this approach is a particle-mesh one. We proposed a parallel algorithm for this formulation based on fast Kronecker products combined with a \textit{divide-and-conquer} strategy whose parallel efficiency has been illustrated through numerical experiments. The limitations of the presented method lie in its theoretical sequential complexity, which could be improved by considering the use of NUFFT, as well as in the number of messages sent, which can also be drastically reduced to only three by grouping messages corresponding to different SKP terms. This could also be combined with an additional trivial level of parallelisation coming from the independence of terms in the SKP to possibly further reduce the communication cost. Nevertheless, we exhibited strong scaling on different configurations as well as a convergence of the overall approximation scheme in accordance with the theoretical estimates provided in this paper.

In order to be entirely convincing, the results presented should be extended to larger architectures in terms of the number of nodes. Considering the GPU implementation of the approach presented here also seems a reasonable prospect in view of the targeted applications. A careful branching with existing molecular dynamics codes with which our domain decomposition strategy is adapted (such as TINKER-HP \cite{lagardere2018tinker}) could also be a way of performing precise comparison with PME.

\section*{Acknowledgement}The author wish to thank Mi-Song Dupuy for suggesting us the bounds on optimal quadrature rules used in this article and Nicolas Greneche for his precious help on MAGI platform.

\bibliographystyle{unsrt}
\bibliography{references}

\begin{thebibliography}{10}

\bibitem{pmeref}
Tom Darden, Darrin York, and Lee Pedersen.
\newblock Particle mesh ewald: An n log(n) method for ewald sums in large
  systems.
\newblock {\em The Journal of Chemical Physics}, 98(12):10089--10092, 1993.

\bibitem{spmeref}
Ulrich Essmann, Lalith Perera, Max Berkowitz, Tom Darden, Hsing Lee, and Lee
  Pedersen.
\newblock A smooth particle mesh ewald method.
\newblock {\em J. Chem. Phys.}, 103:8577, 11 1995.

\bibitem{TOUKMAJI199673}
Abdulnour~Y. Toukmaji and John~A. Board.
\newblock Ewald summation techniques in perspective: a survey.
\newblock {\em Computer Physics Communications}, 95(2):73--92, 1996.

\bibitem{ankh}
Igor Chollet, Louis Lagardère, and Jean-Philip Piquemal.
\newblock Ankh: A generalized o(n) interpolated ewald strategy for molecular
  dynamics simulations.
\newblock {\em Journal of Chemical Theory and Computation}, 19(10):2887--2905,
  2023.

\bibitem{challacombe}
Matt Challacombe, Chris White, and Martin Head-Gordon.
\newblock Periodic boundary conditions and the fast multipole method.
\newblock {\em The Journal of Chemical Physics}, 107, 12 1997.

\bibitem{407723}
V.~Rokhlin and S.~Wandzura.
\newblock The fast multipole method for periodic structures.
\newblock In {\em Proceedings of IEEE Antennas and Propagation Society
  International Symposium and URSI National Radio Science Meeting}, volume~1,
  pages 424--426 vol.1, 1994.

\bibitem{YAN2018214}
Wen Yan and Michael Shelley.
\newblock Flexibly imposing periodicity in kernel independent fmm: A
  multipole-to-local operator approach.
\newblock {\em Journal of Computational Physics}, 355:214--232, 2018.

\bibitem{FONG20098712}
William Fong and Eric Darve.
\newblock The black-box fast multipole method.
\newblock {\em Journal of Computational Physics}, 228(23):8712--8725, 2009.

\bibitem{GREENGARD1987325}
L~Greengard and V~Rokhlin.
\newblock A fast algorithm for particle simulations.
\newblock {\em Journal of Computational Physics}, 73(2):325--348, 1987.

\bibitem{edarig}
P.~Chartier, E.~Darrigrand, and E.~Faou.
\newblock A regular fast multipole method for geometric numerical integrations
  of hamiltonian systems.
\newblock {\em BIT Numerical Mathematics}, 50, 2010.

\bibitem{https://doi.org/10.1002/cpa.22240}
Shidong Jiang and Leslie Greengard.
\newblock A dual-space multilevel kernel-splitting framework for discrete and
  continuous convolution.
\newblock {\em Communications on Pure and Applied Mathematics},
  78(5):1086--1143, 2025.

\bibitem{ewald_german}
P.~P. Ewald.
\newblock Die berechnung optischer und elektrostatischer gitterpotentiale.
\newblock {\em Annalen der Physik}, 369(3):253--287, 1921.

\bibitem{simonett}
Andrew~C. Simmonett and Bernard~R. Brooks.
\newblock A compression strategy for particle mesh ewald theory.
\newblock {\em The Journal of Chemical Physics}, 154(5):054112, 2021.

\bibitem{FFTW05}
Matteo Frigo and Steven~G. Johnson.
\newblock The design and implementation of {FFTW3}.
\newblock {\em Proceedings of the IEEE}, 93(2):216--231, 2005.
\newblock Special issue on ``Program Generation, Optimization, and Platform
  Adaptation''.

\bibitem{10.1007/978-3-030-86359-3_21}
Alan Ayala, Stanimire Tomov, Miroslav Stoyanov, and Jack Dongarra.
\newblock Scalability issues in fft computation.
\newblock In Victor Malyshkin, editor, {\em Parallel Computing Technologies},
  pages 279--287, Cham, 2021. Springer International Publishing.

\bibitem{LOAN200085}
Charles~F.Van Loan.
\newblock The ubiquitous kronecker product.
\newblock {\em Journal of Computational and Applied Mathematics},
  123(1):85--100, 2000.
\newblock Numerical Analysis 2000. Vol. III: Linear Algebra.

\bibitem{doi:10.1137/1.9781421407944}
Gene~H. Golub and Charles~F. Van~Loan.
\newblock {\em Matrix Computations - 4th Edition}.
\newblock Johns Hopkins University Press, Philadelphia, PA, 2013.

\bibitem{inversedistancequad}
Dietrich Braess and Wolfgang Hackbusch.
\newblock Approximation of 1/x by exponential sums in $ [1,\infty]$.
\newblock {\em IMA Journal of Numerical Analysis, v.25, 685-697 (2005)}, 25, 02
  2005.

\bibitem{yarvinrokhlin}
Vladimir Rokhlin and Norman Yarvin.
\newblock Generalized gaussian quadratures and singular value decompositions of
  integral operators.
\newblock {\em Siam Journal on Scientific Computing}, 20:44, 05 1996.

\bibitem{braesshackbusch}
Dietrich Braess and Wolfgang Hackbusch.
\newblock {\em On the efficient computation of high-dimensional integrals and
  the approximation by exponential sums}, pages 39--74.
\newblock 09 2009.

\bibitem{WANG2025113743}
Lei Wang and Robert Krasny.
\newblock Numerical experiments using the barycentric lagrange treecode to
  compute correlated random displacements for brownian dynamics simulations.
\newblock {\em Journal of Computational Physics}, 525:113743, 2025.

\bibitem{berrut2004barycentric}
Jean-Paul Berrut and Lloyd~N Trefethen.
\newblock Barycentric lagrange interpolation.
\newblock {\em SIAM review}, 46(3):501--517, 2004.

\bibitem{higham2004numerical}
Nicholas~J Higham.
\newblock The numerical stability of barycentric lagrange interpolation.
\newblock {\em IMA Journal of Numerical Analysis}, 24(4):547--556, 2004.

\bibitem{10.1007/BF01933407}
J.~H. McCabe and G.~M. Phillips.
\newblock On a certain class of lebesgue constants.
\newblock {\em BIT}, 13(4):434–442, December 1973.

\bibitem{TREFETHEN1991247}
L.N Trefethen and J.A.C Weideman.
\newblock Two results on polynomial interpolation in equally spaced points.
\newblock {\em Journal of Approximation Theory}, 65(3):247--260, 1991.

\bibitem{doi:10.1137/22M1472930}
Igor Chollet, Xavier Claeys, Pierre Fortin, and Laura Grigori.
\newblock A directional equispaced interpolation-based fast multipole method
  for oscillatory kernels.
\newblock {\em SIAM Journal on Scientific Computing}, 45(1):C20--C48, 2023.

\bibitem{multivariate_interpolation}
Bernhard M\"{o}\ss{}ner and Ulrich Reif.
\newblock Error bounds for polynomial tensor product interpolation.
\newblock {\em Computing}, 86(2–3):185–197, October 2009.

\bibitem{HACKBUSCH2002129}
Wolfgang Hackbusch and Steffen Börm.
\newblock H2-matrix approximation of integral operators by interpolation.
\newblock {\em Applied Numerical Mathematics}, 43(1):129--143, 2002.
\newblock 19th Dundee Biennial Conference on Numerical Analysis.

\bibitem{giebermann}
Klaus Giebermann.
\newblock Multilevel approximation of boundary integral operators.
\newblock {\em Computing}, 67:183--207, 10 2001.

\bibitem{shufflealgo}
Marc Davio.
\newblock Kronecker products and shuffle algebra.
\newblock {\em IEEE Transactions on Computers}, C-30(2):116--125, 1981.

\bibitem{doi:10.1137/140980326}
Tuǧrul Dayar and M.~Can Orhan.
\newblock On vector-kronecker product multiplication with rectangular factors.
\newblock {\em SIAM Journal on Scientific Computing}, 37(5):S526--S543, 2015.

\bibitem{tadonkiphilippe1}
Claude Tadonki and Bernard Philippe.
\newblock {\em Parallel multiplication of a vector by a kronecker product of
  matrices}, page 71–89.
\newblock Nova Science Publishers, Inc., USA, 2001.

\bibitem{calu}
Laura Grigori, James~W. Demmel, and Hua Xiang.
\newblock Calu: A communication optimal lu factorization algorithm.
\newblock {\em SIAM Journal on Matrix Analysis and Applications},
  32(4):1317--1350, 2011.

\bibitem{lagardere2018tinker}
Louis Lagard{\`e}re, Luc-Henri Jolly, Filippo Lipparini, F{\'e}lix Aviat,
  Benjamin Stamm, Zhifeng~F Jing, Matthew Harger, Hedieh Torabifard,
  G~Andr{\'e}s Cisneros, Michael~J Schnieders, et~al.
\newblock Tinker-hp: a massively parallel molecular dynamics package for
  multiscale simulations of large complex systems with advanced point dipole
  polarizable force fields.
\newblock {\em Chemical science}, 9(4):956--972, 2018.

\bibitem{LEE20051}
June-Yub Lee and Leslie Greengard.
\newblock The type 3 nonuniform fft and its applications.
\newblock {\em Journal of Computational Physics}, 206(1):1--5, 2005.

\end{thebibliography}

\end{document}